\newtheorem{thm}{Theorem}
\newtheorem{lem}{Lemma}
\newtheorem{cor}{Corollary}
\newtheorem{pro}{Proposition}
\theoremstyle{remark}
\theoremstyle{definition}
\title[Two-dimensional endo-commutative algebras over ${\mathbb F}_2$]{A classification of two-dimensional endo-commutative algebras over ${\mathbb F}_2$}
\author[S.-E. Takahasi]{Sin-Ei Takahasi}
\author[K. Shirayanagi]{Kiyoshi Shirayanagi}
\author[M. Tsukada]{Makoto Tsukada}
\address[S.-E. Takahasi]{Laboratory of Mathematics and Games\\ Katsushika 2-371\\ Funabashi\\ Chiba 273-0032\\ Japan}
\address[K. Shirayanagi, M. Tsukada]{Department of Information Science\\ Toho University\\ Miyama 2-2-1\\ Funabashi\\ Chiba 274-8510\\ Japan}
\email{sin\_ei1@yahoo.co.jp}
\email[K. Shirayanagi (Corresponding author)]{kiyoshi.shirayanagi@is.sci.toho-u.ac.jp}
\email[M. Tsukada]{tsukada@is.sci.toho-u.ac.jp}
\dedicatory{Dedicated to Professor Yuji Kobayashi on his 77th birthday (Kiju)}
\subjclass[2020]{Primary 17A30; Secondary 17D99, 13A99}
\keywords{Nonassociative algebras, Endo-commutative algebras, Commutative algebras, Curled algebras, Straight algebras.}
\date{\today}
\begin{document}




\maketitle

\begin{abstract}
  We introduce a new class of algebras called endo-commutative algebras in which the square mapping preserves multiplication, and provide a complete classification of endo-commutative algebras of dimension 2 over
  the field $\mathbb F_2$ of two elements.  We list all multiplication tables of the algebras up to isomorphism.  This clarifies the difference between commutativity and endo-commutativity of algebras.
\end{abstract}

\section{Introduction}\label{sec:intro}
Let $A$ be a nonassociative algebra.  The square mapping $x\mapsto x^2$ from $A$ to itself yields various important concepts of $A$.  In fact, if the square mapping of $A$ is surjective, then $A$ is said to be square-rootable (see \cite{2dim-comm-char2,z3}). Also, as is well known, if the square mapping of $A$ preserves addition, then $A$ is said to be anti-commutative.
Moreover, if the square mapping of $A$ is the zero mapping, then $A$ is said to be zeropotent.
We refer the reader to \cite{z1,z2,z3} for the details on zeropotent algebras.

The subject of this paper is another concept that also naturally arises from the square mapping.
We define $A$ to be {\it{endo-commutative}}, if the square mapping of $A$ preserves multiplication, that is, $x^2y^2=(xy)^2$ holds for all $x, y\in A$.
This terminology comes from the identity $(xx)(yy)=(xy)(xy)$ that depicts the {\it innerly} commutative property%
\footnote{The more general identity $(xy)(uv)=(xu)(yv)$ is given many other names such as {\it medial}.
For studies related to medial algebras, see \cite{JK,medial}.}.
The aim of this paper is to completely classify two-dimensional endo-commutative algebras over $\mathbb F_2$.
The strategy for the classification is based on that of \cite{2dim-comm-char2}. We can find classifications of {\it associative} algebras of dimension 2 over the real and complex number
fields in \cite{onkochishin}.
For other studies on two-dimensional algebras, see \cite{moduli,2dim,variety,classification}. 

The rest of the paper is organized as follows.  In Section~\ref{sec:iso-criterion}, we characterize two-dimensional algebras over $\mathbb F_2$ by the structure matrix with respect to a linear base whose entries are determined from the product between each pair of the base.  In the term of an equivalence relation between the matrices, we give a criterion for isomorphism between two-dimensional algebras over $\mathbb F_2$ (Proposition~\ref{pro:iso-criterion}).  By this, the problem of classifying two-dimensional algebras comes down to that of determining equivalent classes of structure matrices. In Section~\ref{sec:ec-algebras}, we characterize endo-commutativity of two-dimensional algebras over $\mathbb F_2$ in terms of structure matrices  (Proposition~\ref{pro:struc-matrix}). 

We separete two-dimensional algebras into two categories: {\it{curled}} and {\it{straight}}.  That is, a two-dimensional algebra is curled if the square of any element $x$ is a scalar multiple of $x$, otherwise it is straight.
Research related to curled algebras can be found in \cite{level2,length1}.
In Section~\ref{sec:ec-curled}, we determine endo-commutative curled algebras of dimension 2 over $\mathbb F_2$ in terms of structure matrices (Proposition~\ref{pro:ec-curled}).  In Section~\ref{sec:3special-curled}, we determine unital, commutative, and associative algebras in the family of two-dimensional curled algebras over $\mathbb F_2$ (Proposition~\ref{pro:3special-curled}). In Section~\ref{sec:class-ec-curled}, by applying the results obtained in Sections~\ref{sec:ec-curled} and \ref{sec:3special-curled}, we completely classify two-dimensional endo-commutative curled algebras over $\mathbb F_2$ into the eight algebras
\[
ECC^2_0, ECC^2_1, ECC^2_2, ECC^2_3, ECC^2_4, ECC^2_5, ECC^2_{6}\, \, {\rm{and}}\, \, ECC^2_{7}
\]
up to isomorphism (Theorem~\ref{thm:ec-curled}).  By Theorem~\ref{thm:ec-curled} and Proposition~\ref{pro:3special-curled}, we see that in the class of two-dimensional endo-commutative curled algebras over $\mathbb F_2$,
zeropotent algebras are $ECC_0^2$ and $ECC_1^2$, unital algebra is $ECC_6^2$, commutative algebras are $ECC_0^2$, $ECC_1^2$, $ECC_6^2$ and $ECC_7^2$, and associative algebras are
$ECC_0^2$, $ECC_4^2$, $ECC_5^2$ and $ECC_6^2$ up to isomorphism. Therefore, it can be said that only $ECC_2^2$ and $ECC_3^2$ are purely endo-commutative curled algebras with no special other properties.
In Section~\ref{sec:ec-straight}, we determine endo-commutative straight algebras of dimension 2 over $\mathbb F_2$ in terms of structure matrices  (Proposition~\ref{prop:ec-straight}).  In Section~\ref{sec:3special-straight},
we determine unital, commutative, and associative algebras in the family of two-dimensional straight algebras over $\mathbb F_2$ (Proposition~\ref{pro:3special-straight}).
In Section~\ref{sec:class-ec-straight}, by applying the result obtained in Sections~\ref{sec:ec-straight} and \ref{sec:3special-straight}, we completely classify two-dimensional endo-commutative straight algebras over $\mathbb F_2$ into the thirteen algebras
\[
ECS^2_1, ECS^2_2, ECS^2_3, ECS^2_4, ECS^2_5, ECS^2_{6}, ECS^2_7, ECS^2_8, ECS^2_9, ECS^2_{10}, ECS^2_{11}, \] \[ECS^2_{12}\, \, {\rm{and}}\, \, ECS^2_{13}
\]
up to isomorphism (Theorem~\ref{thm:ec-straight}).  By Theorem~\ref{thm:ec-straight} and Proposition~\ref{pro:3special-straight}, we see that in the class of two-dimensional endo-commutative straight algebras over $\mathbb F_2$,
unital algebras are $ECS_7^2$ and $ECS_{11}^2$, commutative algebras are $ECS_5^2$, $ECS_6^2$, $ECS^2_7$, $ECS_8^2$, $ECS_9^2$, $ECS_{10}^2$ and $ECS_{11}^2$, associative algebras are
$ECS_7^2, ECS_8^2$ and $ECS_{10}^2$ up to isomorphism. Therefore, it can be said that only $ECS_1^2$, $ECS_2^2$, $ECS_3^2$, $ECS^2_{4}$, $ECS^2_{12}$ and $ECS_{13}^2$ are purely endo-commutative straight algebras
with no special other properties.

Putting all this together, in the family of two-dimensional endo-commutative algebras over
$\mathbb F_2$, only eight algebras $ECC_2^2$, $ECC_3^2$, $ECS_1^2$, $ECS_2^2$, $ECS_3^2$, $ECS^2_4$, $ECS_{12}^2$ and $ECS^2_{13}$  are not zeropotent, unital, commutative nor associative. Finally, we claim that if a two-dimensional curled algebra over $\mathbb F_2$ satisfies
unitality, commutativity or associativity, then it is endo-commutative (Corollary~\ref{cor:3special-curled}).
However, this does not hold in the straight case: whereas unitality implies endo-commutativity,
if a two-dimensional straight algebra over $\mathbb F_2$ is commutative or associative, then it is not necessarily endo-commutative.


\section{A criterion for isomorphism of two-dimensional algebras}\label{sec:iso-criterion}
  For any $X=\small{\begin{pmatrix}a&b\\c&d\end{pmatrix}}\in GL_2(\mathbb F_2)$, define
\[
\widetilde{X}=\begin{pmatrix}a&b&ab&ab\\c&d&cd&cd\\ac&bd&ad&bc\\ac&bd&bc&ad\end{pmatrix}.
\]
Then we have the following:

\begin{lem}\label{lem:homo}
The mapping $X\mapsto\widetilde{X}$ is a group homomorphism from $GL_2(\mathbb F_2)$ into $GL_4(\mathbb F_2)$.
\end{lem}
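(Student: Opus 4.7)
The plan is to recognize $\widetilde{X}$ as a conjugate of the tensor (Kronecker) square $X \otimes X$ by a fixed permutation matrix, thereby reducing the lemma to the standard fact that $X \mapsto X \otimes X$ is a homomorphism $GL_2(\mathbb{F}_2) \to GL_4(\mathbb{F}_2)$.

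First I would write out the matrix of $X \otimes X$ in the ordered basis $e_1 \otimes e_1,\, e_1 \otimes e_2,\, e_2 \otimes e_1,\, e_2 \otimes e_2$ of $\mathbb{F}_2^2 \otimes \mathbb{F}_2^2$, using $(X \otimes X)_{(i,j),(k,l)} = X_{ik} X_{jl}$ and the fact that every element of $\mathbb{F}_2$ squares to itself. This yields
\[
X \otimes X \;=\; \begin{pmatrix} a & ab & ab & b \\ ac & ad & bc & bd \\ ac & bc & ad & bd \\ c & cd & cd & d \end{pmatrix}.
\]
Next, let $P \in GL_4(\mathbb{F}_2)$ be the permutation matrix corresponding to the transposition $(2\ 4)$. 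A direct comparison of entries shows
\[
\widetilde{X} \;=\; P\,(X \otimes X)\,P,
\]
that is, $\widetilde{X}$ is obtained from $X \otimes X$ by swapping rows $2$ and $4$ and swapping columns $2$ and $4$.

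With the conjugation formula in hand, the homomorphism property follows formally: since $P^{2} = I$ and $(XY) \otimes (XY) = (X \otimes X)(Y \otimes Y)$,
\[
\widetilde{XY} \;=\; P \bigl((XY)\otimes(XY)\bigr) P \;=\; P (X\otimes X)(Y\otimes Y) P \;=\; \bigl(P(X\otimes X)P\bigr)\bigl(P(Y\otimes Y)P\bigr) \;=\; \widetilde{X}\,\widetilde{Y}.
\]
Invertibility of $\widetilde{X}$ is immediate from invertibility of $X \otimes X$ (whose inverse is $X^{-1} \otimes X^{-1}$) together with invertibility of $P$, so $\widetilde{X} \in GL_4(\mathbb{F}_2)$. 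The main obstacle is just the bookkeeping in identifying the correct permutation $P$; if one prefers to avoid tensor products altogether, the identity $\widetilde{XY} = \widetilde{X}\widetilde{Y}$ can be checked by direct expansion of all sixteen entries over the polynomial ring $\mathbb{F}_2[a,b,c,d,p,q,r,s]$ modulo the relations $x^{2} = x$, but this route is considerably more tedious.
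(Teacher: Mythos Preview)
Your argument is correct. The identification $\widetilde{X}=P(X\otimes X)P$ with $P$ the permutation matrix of the transposition $(2\ 4)$ checks out entry by entry over $\mathbb{F}_2$ (using $a^2=a$, etc.), and from there the homomorphism property and invertibility follow formally as you wrote.

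The paper's own proof is simply ``Straightforward,'' i.e.\ it leaves the verification of $\widetilde{XY}=\widetilde{X}\,\widetilde{Y}$ (and of $\det\widetilde{X}\neq 0$) to a direct entrywise computation---which over $\mathbb{F}_2$ is finite and small, since $GL_2(\mathbb{F}_2)$ has only six elements. Your route is genuinely different and more conceptual: by recognizing $\widetilde{X}$ as a conjugate of the Kronecker square you reduce everything to the functoriality of $\otimes$, and you get invertibility for free via $\widetilde{X}^{-1}=P(X^{-1}\otimes X^{-1})P=\widetilde{X^{-1}}$. The payoff is that your argument works verbatim over any commutative ring in which squaring is the identity (or, dropping that, with $a^2,b^2,c^2,d^2$ in place of $a,b,c,d$ on the diagonal blocks), and it explains \emph{why} the map is a homomorphism rather than merely confirming it. The paper's brute-force check, on the other hand, is quicker to carry out in this tiny case and requires no auxiliary machinery.
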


\begin{proof}
Straightforward.
\end{proof}

Let $A$ be a 2-dimensional algebra over $\mathbb F_2$ with a linear base $\{e, f\}$. We write
\[
\left\{
    \begin{array}{@{\,}lll}
     e^2=a_1e+b_1f\\
     f^2=a_2e+b_2f\\
     ef=a_3e+b_3f\\
     fe=a_4e+b_4f\\
   \end{array}
  \right. 
\]
with $a_i, b_i\in \mathbb F_2\, \, (1\le i\le4)$.  Since the structure of $A$ is determined by the multiplication table $\begin{pmatrix}e^2&ef\\fe&f^2\end{pmatrix}$, we say that $A$ is the algebra on $\{e, f\}$ defined by $\begin{pmatrix}e^2&ef\\fe&f^2\end{pmatrix}$.  Also the matrix $A=\begin{pmatrix}a_1&b_1\\a_2&b_2\\a_3&b_3\\a_4&b_4\end{pmatrix}$ is called the $\it{ structure\,  matrix}$ of $A$ with respect to the base $\{e, f\}$. 

We hereafter will freely use the same symbol $A$ for the matrix and for the algebra because the algebra $A$ is determined by its structure matrix.  Then we have the following:

\begin{pro}\label{pro:iso-criterion}
Let $A$ and $A'$ be two-dimensional algebras over $\mathbb F_2$.  Then $A$ and $A'$ are isomorphic iff there is $X\in GL_2(\mathbb F_2)$ such that 
\begin{equation}\label{eq:iso-criterion}
A'=\widetilde{X^{-1}}AX.
\end{equation}
\end{pro}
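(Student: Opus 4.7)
The plan is to derive an explicit change-of-basis formula for structure matrices, from which both directions of the equivalence then drop out via Lemma~\ref{lem:homo}. Concretely, I would first prove the following auxiliary claim: let $B$ be any two-dimensional algebra over $\mathbb{F}_2$ with basis $\{u,v\}$ and structure matrix $S$, and let $\{u',v'\}$ be another basis with $u' = \alpha u + \beta v$, $v' = \gamma u + \delta v$, so that $Y=\begin{pmatrix}\alpha&\beta\\\gamma&\delta\end{pmatrix}\in GL_2(\mathbb{F}_2)$. Then the structure matrix of $B$ with respect to $\{u',v'\}$ equals $\widetilde{Y}SY^{-1}$.

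Verifying this auxiliary formula is the main computational step. Expanding
\[
(u')^2 = (\alpha u + \beta v)(\alpha u + \beta v) = \alpha^2 u^2 + \alpha\beta\,(uv) + \alpha\beta\,(vu) + \beta^2 v^2
\]
and using $\alpha^2=\alpha$ and $\beta^2=\beta$ in $\mathbb{F}_2$ gives $(u')^2 = \alpha\,u^2 + \beta\,v^2 + \alpha\beta\,(uv) + \alpha\beta\,(vu)$. Analogous expansions of $(v')^2$, $u'v'$, and $v'u'$ show that the coefficients of $u^2, v^2, uv, vu$ arising in these four products form precisely the four rows of $\widetilde{Y}$; this is exactly where the peculiar entry pattern of $\widetilde{Y}$ (repeated third and fourth columns in the first two rows, and the swapped last two entries in rows~3 and~4) is forced upon us. Consequently, the four products written as row vectors in the $\{u,v\}$-basis assemble into $\widetilde{Y}S$. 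Converting each such row from $\{u,v\}$-coordinates to $\{u',v'\}$-coordinates amounts to right-multiplication by $Y^{-1}$, yielding the structure matrix $\widetilde{Y}SY^{-1}$ in the new basis.

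With this formula in hand, both directions become automatic. For the forward direction, given an algebra isomorphism $\phi\colon A\to A'$, define $X\in GL_2(\mathbb{F}_2)$ by $\phi(e) = X_{11}e' + X_{12}f'$ and $\phi(f) = X_{21}e' + X_{22}f'$. Since $\phi$ preserves products, the structure matrix of $A'$ in the basis $\{\phi(e),\phi(f)\}$ is exactly $A$; applying the change-of-basis formula inside $A'$ gives $A = \widetilde{X}A'X^{-1}$, and rearranging while invoking Lemma~\ref{lem:homo} to replace $\widetilde{X}^{-1}$ by $\widetilde{X^{-1}}$ yields $A' = \widetilde{X^{-1}}AX$. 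For the converse, given such an $X$, define $\phi\colon A\to A'$ linearly by the same two formulas; the change-of-basis formula shows that the structure matrix of $A'$ with respect to $\{\phi(e),\phi(f)\}$ equals $\widetilde{X}A'X^{-1} = \widetilde{X}\widetilde{X^{-1}}AXX^{-1} = A$, which is exactly the statement that $\phi$ carries the multiplication table of $A$ to that of $A'$, so $\phi$ is an algebra isomorphism.

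I expect the main obstacle to be the bookkeeping in the auxiliary formula: one must keep $uv$ and $vu$ strictly distinct throughout the expansions (the algebras are not assumed commutative), and then match the four resulting coefficient vectors entry by entry against the definition of $\widetilde{Y}$. Once that is discharged, Lemma~\ref{lem:homo} handles all the remaining algebraic manipulation cleanly.
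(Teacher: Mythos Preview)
Your proposal is correct and follows essentially the same route as the paper: both arguments hinge on the same change-of-basis computation showing that the four products $(u')^2,(v')^2,u'v',v'u'$ expand with coefficient matrix $\widetilde{Y}$, and both invoke Lemma~\ref{lem:homo} to rewrite $\widetilde{X}^{-1}$ as $\widetilde{X^{-1}}$. The only difference is packaging: you isolate the change-of-basis identity $\widetilde{Y}SY^{-1}$ as a standalone auxiliary claim, whereas the paper encodes the same computation directly as the identity $\widetilde{X}A'\binom{e'}{f'}=AX\binom{e'}{f'}$ and declares it an ``easy calculation.''
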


\begin{proof}
Let $A$ and $A'$ be the structure matrices of $A$ on a linear base $\{e, f\}$ and $A'$ on a linear base $\{e', f'\}$, respectively.  We write
\[
A=\begin{pmatrix}a_1&b_1\\a_2&b_2\\a_3&b_3\\a_4&b_4\end{pmatrix}\, \, {\rm{and}}\, \, A'=\begin{pmatrix}c_1&d_1\\c_2&d_2\\c_3&d_3\\c_4&d_4\end{pmatrix}.
\]
Suppose that $\Phi : A\to A'$ is an isomorphism and let $X=\begin{pmatrix}a&b\\c&d\end{pmatrix}\, \, (a, b, c, d\in \mathbb F_2)$ be the matrix associated with $\Phi$, that is, $\small{\begin{pmatrix}\Phi(e)\\\Phi(f)\end{pmatrix}=X\begin{pmatrix}e'\\f'\end{pmatrix}}$, so  $X\in GL_2(\mathbb F_2)$.  By an easy calculatin, we see $\widetilde{X}A'\begin{pmatrix}e'\\f'\end{pmatrix}=AX\begin{pmatrix}e'\\f'\end{pmatrix}$, and hence we get (\ref{eq:iso-criterion}) because $\widetilde{X}^{-1}=\widetilde{X^{-1}}$ from Lemma~\ref{lem:homo}.  

Conversely, suppose that there is $X\in GL_2(\mathbb F_2)$ satisfying (\ref{eq:iso-criterion}).  Let $\Phi : A\to A'$ be the linear mapping defined by $\small{\begin{pmatrix}\Phi(e)\\\Phi(f)\end{pmatrix}=X\begin{pmatrix}e'\\f'\end{pmatrix}}$.  Then we can easily see that $\Phi$ is isomorphic by following the reverse of the above argument, hence $A$ and $A'$ are isomorphic.
\end{proof}

\begin{cor}\label{cor:rank}
  Let $A$ and $A'$ be two-dimensional algebras over $\mathbb F_2$.  If $A$ and $A'$ are isomorphic, then ${\rm{rank}} A={\rm{rank}}A'$.
\end{cor}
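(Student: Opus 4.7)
The plan is to deduce the corollary directly from Proposition~\ref{pro:iso-criterion} together with the basic linear-algebra fact that multiplying a matrix on either side by an invertible matrix does not change its rank. Since the corollary concerns rank of the $4\times 2$ structure matrices, the only thing I really need to justify is that both factors appearing in the transformation law of Proposition~\ref{pro:iso-criterion} are invertible.

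First I would invoke Proposition~\ref{pro:iso-criterion}: if $A\cong A'$ as two-dimensional algebras over $\mathbb F_2$, then there exists $X\in GL_2(\mathbb F_2)$ with $A'=\widetilde{X^{-1}}AX$. The matrix $X$ is invertible by hypothesis, and Lemma~\ref{lem:homo} tells us that the map $Y\mapsto\widetilde{Y}$ is a group homomorphism from $GL_2(\mathbb F_2)$ into $GL_4(\mathbb F_2)$; in particular, $\widetilde{X^{-1}}\in GL_4(\mathbb F_2)$ is invertible as well.

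Next I would observe that $A$ and $A'$ are $4\times 2$ matrices over $\mathbb F_2$, so the product $\widetilde{X^{-1}}AX$ makes sense and, being obtained from $A$ by left multiplication by an invertible $4\times 4$ matrix and right multiplication by an invertible $2\times 2$ matrix, has the same rank as $A$. Therefore $\operatorname{rank} A'=\operatorname{rank}\!\bigl(\widetilde{X^{-1}}AX\bigr)=\operatorname{rank} A$, which is the desired conclusion.

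There is no real obstacle in this argument; the only point requiring care is that one must use Lemma~\ref{lem:homo} (and not an ad hoc computation) to know that $\widetilde{X^{-1}}$ is invertible, since invertibility of $X$ alone does not obviously imply invertibility of the constructed $4\times 4$ matrix without the homomorphism property.
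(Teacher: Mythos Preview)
Your argument is correct and is exactly the intended one: the paper states Corollary~\ref{cor:rank} without proof, as an immediate consequence of Proposition~\ref{pro:iso-criterion} and Lemma~\ref{lem:homo}, which is precisely what you spell out.
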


When (\ref{eq:iso-criterion}) holds, we say that the matrices $A$ and $A'$ are equivalent and refer to $X$ as a $\it{transformation\, \, matrix}$ for the equivalence $A\cong A'$.  Also, we call this $X$ a transformation matrix for the isomorphism $A\cong A'$ or simply for $A$ and $A'$ as well.

\section{Two-dimensional endo-commutative algebras over $\mathbb F_2$}\label{sec:ec-algebras}
Let $A$ be a two-dimensional endo-commutative algebra over $\mathbb F_2$ with the structure matrix $A=\begin{pmatrix}a_1&b_1\\a_2&b_2\\a_3&b_3\\a_4&b_4\end{pmatrix}$.  Let $x$ and $y$ be any elements of $A$ and write $\small{\left\{
    \begin{array}{@{\,}lll}
     x=x_1e+x_2f\\
     y=y_1e+y_2f.
   \end{array}
  \right. }$
Put
\[
\left\{
    \begin{array}{@{\,}lll}
     A=x_1a_1+x_2a_2+x_1x_2(a_3+a_4)\\
     B=x_1b_1+x_2b_2+x_1x_2(b_3+b_4)\\
     C=y_1a_1+y_2a_2+y_1y_2(a_3+a_4)\\
     D=y_1b_1+y_2b_2+y_1y_2(b_3+b_4)\\
     E=x_1y_1a_1+x_2y_2a_2+x_1y_2a_3+x_2y_1a_4\\
     F=x_1y_1b_1+x_2y_2b_2+x_1y_2b_3+x_2y_1b_4,
   \end{array}
  \right. 
  \]
  where obviously the symbol $A$ does not denote the algebra, 
hence $x^2=Ae+Bf, y^2=Ce+Df$ and $xy=Ee+Ff$.  Then 
\begin{align*}
x^2y^2&=(ACa_1+BDa_2+ADa_3+BCa_4)e+(ACb_1+BDb_2+ADb_3+BCb_4)f
\end{align*}
and
\begin{align*}
(xy)^2&=\{Ea_1+Fa_2+EF(a_3+a_4)\}e+\{Eb_1+Fb_2+EF(b_3+b_4)\}f.
\end{align*}
Then $A$ is endo-commutative iff
\begin{equation}\label{eq:ec}
\left\{
    \begin{array}{@{\,}lll}
    ACa_1+BDa_2+ADa_3+BCa_4=Ea_1+Fa_2+EF(a_3+a_4)\\
    ACb_1+BDb_2+ADb_3+BCb_4=Eb_1+Fb_2+EF(b_3+b_4).
         \end{array}
  \right. 
\end{equation}
holds for all $x_1, x_2, y_1, y_2\in\mathbb F_2$.  Put
\begin{align*}
 X_1=x_1&y_1, X_2=x_2y_2, X_3=x_1y_2, X_4=x_2y_1, X_5=x_1x_2y_1y_2, \\
 &X_6=x_1y_1y_2, X_7=x_1x_2y_1,  X_8=x_1x_2y_2, X_9=x_2y_1y_2.
\end{align*}
\begin{lem}\label{lem:lin-indep}
The nine polynomials $X_i\, \, (1\le i\le9)$  are linearly independent over $\mathbb F_2$.
\end{lem}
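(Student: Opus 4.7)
The plan is to prove this by the standard evaluation trick: since each $X_i$ is a distinct multilinear (i.e., squarefree in each of $x_1,x_2,y_1,y_2$) monomial over $\mathbb F_2$, and the $16$ multilinear monomials in four variables form an $\mathbb F_2$-basis for the space of functions $\mathbb F_2^4\to\mathbb F_2$ (dimension $16$), the nine $X_i$ are automatically a linearly independent subset. Rather than invoking this fact, I will give an elementary argument by substitution.

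Assume a relation $\sum_{i=1}^{9}c_i X_i=0$ holding for all $x_1,x_2,y_1,y_2\in\mathbb F_2$, with $c_i\in\mathbb F_2$. The plan is to choose nine $4$-tuples $(x_1,x_2,y_1,y_2)\in\mathbb F_2^4$ so that, read in the right order, each substitution forces one more $c_i$ to vanish. First substitute the four tuples that isolate exactly one of $X_1,X_2,X_3,X_4$:
\begin{align*}
(1,0,1,0)&\ \Longrightarrow\ c_1=0, & (0,1,0,1)&\ \Longrightarrow\ c_2=0,\\
(1,0,0,1)&\ \Longrightarrow\ c_3=0, & (0,1,1,0)&\ \Longrightarrow\ c_4=0.
\end{align*}
(One checks that at each of these points all remaining $X_i$ vanish because some variable equals $0$ kills them.)

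With $c_1=c_2=c_3=c_4=0$ in hand, substitute the four ``weight three'' tuples to peel off $c_6,c_7,c_8,c_9$:
\begin{align*}
(1,0,1,1)&\ \Longrightarrow\ c_1+c_3+c_6=0,\quad \text{hence } c_6=0,\\
(1,1,1,0)&\ \Longrightarrow\ c_1+c_4+c_7=0,\quad \text{hence } c_7=0,\\
(1,1,0,1)&\ \Longrightarrow\ c_2+c_3+c_8=0,\quad \text{hence } c_8=0,\\
(0,1,1,1)&\ \Longrightarrow\ c_2+c_4+c_9=0,\quad \text{hence } c_9=0.
\end{align*}
Finally, the substitution $(1,1,1,1)$ makes every $X_i$ equal to $1$, giving $c_1+\cdots+c_9=0$, which now collapses to $c_5=0$. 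All nine coefficients vanish, proving linear independence.

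There is no real obstacle: the only thing to watch is selecting nine evaluation points that triangulate the coefficient system, and verifying at each point which monomials survive. The choices above were made so that each new point introduces exactly one previously-unconstrained $c_i$, so the system is already in triangular form and no inversion is needed.
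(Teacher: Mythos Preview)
Your proof is correct: the nine evaluation points you chose indeed triangulate the system, and each substitution checks out. The paper's own proof is simply the word ``Straightforward,'' so you have supplied a clean, explicit verification of exactly the kind the authors left to the reader.
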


\begin{proof}
 Straightforward.
\end{proof}

By an easy calculation,  we have
\begin{align*}
 ACa_1+&BDa_2+ADa_3+BCa_4\\
 &=\{a_1+a_2b_1+a_1a_3b_1+a_1a_4b_1\}X_1+\{a_1a_2+a_2b_2+a_2a_3b_2+a_2a_4b_2\}X_2\\
&\quad+\{a_1a_2+a_2b_1b_2+a_1a_3b_2+a_2a_4b_1\}X_3+\{a_1a_2+a_2b_1b_2+a_2a_3b_1+a_1a_4b_2\}X_4\\
&\quad+\{(a_1+b_3+b_4)(a_3+a_4)+a_2(b_3+b_4)\}X_5\\
&\quad+\{(a_1+a_4b_1)(a_3+a_4)+(a_2b_1+a_1a_3)(b_3+b_4)\}X_6\\
&\quad+\{(a_1+a_3b_1)(a_3+a_4)+(a_2b_1+a_1a_4)(b_3+b_4)\}X_7\\
&\quad+\{(a_1a_2+a_3b_2)(a_3+a_4)+a_2(a_4+b_2)(b_3+b_4)\}X_8\\
&\quad+\{(a_1a_2+a_4b_2)(a_3+a_4)+a_2(a_3+b_2)(b_3+b_4)\}X_9
\end{align*}
and
\begin{align*}
&Ea_1+Fa_2+EF(a_3+a_4)\\
&=\{a_1+a_2b_1+a_1b_1(a_3+a_4)\}X_1+\{a_1a_2+a_2b_2+a_2b_2(a_3+a_4)\}X_2\\
&\quad+\{a_1a_3+a_2b_3+a_3b_3(a_3+a_4)\}X_3+\{a_1a_4+a_2b_4+a_4b_4(a_3+a_4)\}X_4\\
&\quad+(a_3+a_4)(a_1b_2+a_2b_1+a_3b_4+a_4b_3)X_5\\
&\quad+(a_3+a_4)(a_1b_3+a_3b_1)X_6\\
&\quad+(a_3+a_4)(a_1b_4+a_4b_1)X_7\\
&\quad+(a_3+a_4)(a_2b_3+a_3b_2)X_8\\
&\quad+(a_3+a_4)(a_2b_4+a_4b_2)X_9.
\end{align*}
Then we see  from Lemma~\ref{lem:lin-indep} that the first equation of (\ref{eq:ec}) holds for all  $x_1, x_2, y_1, y_2\in\mathbb F_2$ iff
\[
\left\{
    \begin{array}{@{\,}lll}
   a_1+a_2b_1+a_1a_3b_1+a_1a_4b_1=a_1+a_2b_1+a_1b_1(a_3+a_4)\\
a_1a_2+a_2b_2+a_2a_3b_2+a_2a_4b_2=a_1a_2+a_2b_2+a_2b_2(a_3+a_4)\\
  a_1a_2+a_2b_1b_2+a_1a_3b_2+a_2a_4b_1=a_1a_3+a_2b_3+a_3b_3(a_3+a_4)\\
  a_1a_2+a_2b_1b_2+a_2a_3b_1+a_1a_4b_2=a_1a_4+a_2b_4+a_4b_4(a_3+a_4)\\
 (a_1+b_3+b_4)(a_3+a_4)+a_2(b_3+b_4)=(a_3+a_4)(a_1b_2+a_2b_1+a_3b_4+a_4b_3)\\
 (a_1+a_4b_1)(a_3+a_4)+(a_2b_1+a_1a_3)(b_3+b_4)=(a_3+a_4)(a_1b_3+a_3b_1)\\
 (a_1+a_3b_1)(a_3+a_4)+(a_2b_1+a_1a_4)(b_3+b_4)=(a_3+a_4)(a_1b_4+a_4b_1)\\
 (a_1a_2+a_3b_2)(a_3+a_4)+a_2(a_4+b_2)(b_3+b_4)=(a_3+a_4)(a_2b_3+a_3b_2)\\
 (a_1a_2+a_4b_2)(a_3+a_4)+a_2(a_3+b_2)(b_3+b_4)=(a_3+a_4)(a_2b_4+a_4b_2).
   \end{array}
  \right. 
  \]
Note that the first two equations always hold in the above nine equations.  

Similarly, we see that the second equation of (\ref{eq:ec}) holds for all  $x_1, x_2, y_1, y_2\in\mathbb F_2$ iff
\[
\left\{
    \begin{array}{@{\,}lll}  a_1b_1+b_1b_2+a_1b_1b_3+a_1b_1b_4=a_1b_1+b_1b_2+a_1b_1(b_3+b_4)\\
a_2b_1+b_2+a_2b_2b_3+a_2b_2b_4= a_2b_1+b_2+a_2b_2(b_3+b_4)\\
a_1a_2b_1+b_1b_2+a_1b_2b_3+a_2b_1b_4=a_3b_1+b_2b_3+a_3b_3(b_3+b_4)\\
a_1a_2b_1+b_1b_2+a_2b_1b_3+a_1b_2b_4= a_4b_1+b_2b_4+a_4b_4(b_3+b_4)\\
b_1(a_3+a_4)+(b_2+a_3+a_4)(b_3+b_4)=(b_3+b_4)(a_1b_2+a_2b_1+a_3b_4+a_4b_3)\\
b_1(a_1+b_4)(a_3+a_4)+(b_1b_2+a_1b_3)(b_3+b_4)=(b_3+b_4)(a_1b_3+a_3b_1)\\
b_1(a_1+b_3)(a_3+a_4)+(b_1b_2+a_1b_4)(b_3+b_4)=(b_3+b_4)(a_1b_4+a_4b_1)\\
(a_2b_1+b_2b_3)(a_3+a_4)+(b_2+a_2b_4)(b_3+b_4)=(b_3+b_4)(a_2b_3+a_3b_2)\\
(a_2b_1+b_2b_4)(a_3+a_4)+(b_2+a_2b_3)(b_3+b_4)=(b_3+b_4)(a_2b_4+a_4b_2).
    \end{array}
  \right. 
\]
Note that the first two equations always hold in the above nine equations.  Therefore $A$ is endo-commutative iff
\begin{equation}\label{eq:ec-struc-together}
\left\{\begin{array}{@{\,}lll}   a_1a_2+a_2b_1b_2+a_1a_3b_2+a_2a_4b_1=a_1a_3+a_2b_3+a_3b_3(a_3+a_4)\cdots{\rm{(i)}}\\
  a_1a_2+a_2b_1b_2+a_2a_3b_1+a_1a_4b_2=a_1a_4+a_2b_4+a_4b_4(a_3+a_4)\cdots{\rm{(ii)}}\\
  (a_1+b_3+b_4)(a_3+a_4)+a_2(b_3+b_4)=(a_3+a_4)(a_1b_2+a_2b_1+a_3b_4+a_4b_3)\cdots{\rm{(iii)}}\\
 (a_1+a_4b_1)(a_3+a_4)+(a_2b_1+a_1a_3)(b_3+b_4)=(a_3+a_4)(a_1b_3+a_3b_1)\cdots{\rm{(iv)}}\\
 (a_1+a_3b_1)(a_3+a_4)+(a_2b_1+a_1a_4)(b_3+b_4)=(a_3+a_4)(a_1b_4+a_4b_1)\cdots{\rm{(v)}}\\
 (a_1a_2+a_3b_2)(a_3+a_4)+a_2(a_4+b_2)(b_3+b_4)=(a_3+a_4)(a_2b_3+a_3b_2)\cdots{\rm{(vi)}}\\
 (a_1a_2+a_4b_2)(a_3+a_4)+a_2(a_3+b_2)(b_3+b_4)=(a_3+a_4)(a_2b_4+a_4b_2)\cdots{\rm{(vii)}}\\
a_1a_2b_1+b_1b_2+a_1b_2b_3+a_2b_1b_4=a_3b_1+b_2b_3+a_3b_3(b_3+b_4)\cdots{\rm{(viii)}}\\
a_1a_2b_1+b_1b_2+a_2b_1b_3+a_1b_2b_4=a_4b_1+b_2b_4+a_4b_4(b_3+b_4)\cdots{\rm{(ix)}}\\b_1(a_3+a_4)+(b_2+a_3+a_4)(b_3+b_4)=(b_3+b_4)(a_1b_2+a_2b_1+a_3b_4+a_4b_3)\cdots{\rm{(x)}}\\
b_1(a_1+b_4)(a_3+a_4)+(b_1b_2+a_1b_3)(b_3+b_4)=(b_3+b_4)(a_1b_3+a_3b_1)\cdots{\rm{(xi)}}\\
b_1(a_1+b_3)(a_3+a_4)+(b_1b_2+a_1b_4)(b_3+b_4)=(b_3+b_4)(a_1b_4+a_4b_1)\cdots{\rm{(xii)}}\\
(a_2b_1+b_2b_3)(a_3+a_4)+(b_2+a_2b_4)(b_3+b_4)=(b_3+b_4)(a_2b_3+a_3b_2)\cdots{\rm{(xiii)}}\\
(a_2b_1+b_2b_4)(a_3+a_4)+(b_2+a_2b_3)(b_3+b_4)=(b_3+b_4)(a_2b_4+a_4b_2)\cdots{\rm{(xiv)}}\\.    
\end{array} \right. 
\end{equation}

(I) (i) and (ii) imply (iii) by an easy calculation.

(II) (viii) and (ix) imply (x) by an easy calculation.

(III) We see easily that (iv)$\Leftrightarrow$(v), (vi)$\Leftrightarrow$(vii), (xi)$\Leftrightarrow$(xii) and (xiii)$\Leftrightarrow$(xiv). 

 By (I), (II) and (III),  (\ref{eq:ec-struc-together}) can be rewritten as
\begin{equation}\label{eq:ec-struc}
\left\{\begin{array}{@{\,}lll}   
a_1a_2+a_2b_1b_2+a_1a_3b_2+a_2a_4b_1+a_1a_3+a_2b_3+a_3b_3+a_3a_4b_3=0\\
a_1a_2+a_2b_1b_2+a_2a_3b_1+a_1a_4b_2+a_1a_4+a_2b_4+a_3a_4b_4+a_4b_4=0\\
a_1a_3+a_1a_4+a_4b_1+a_2b_1b_3+a_2b_1b_4+a_1a_3b_4+a_3b_1+a_1a_4b_3=0\\
a_1a_2a_4+a_2a_4b_4+a_2b_2b_4+a_1a_2a_3+a_2b_2b_3+a_2a_3b_3=0\\
a_1a_2b_1+b_1b_2+a_1b_2b_3+a_2b_1b_4+a_3b_1+b_2b_3+a_3b_3+a_3b_3b_4=0\\
a_1a_2b_1+b_1b_2+a_2b_1b_3+a_1b_2b_4+a_4b_1+b_2b_4+a_4b_3b_4+a_4b_4=0\\
a_1a_3b_1+a_1a_4b_1+a_4b_1b_4+b_1b_2b_3+b_1b_2b_4+a_3b_1b_3=0\\
a_2a_3b_1+a_2a_4b_1+a_4b_2b_3+b_2b_3+b_2b_4+a_2b_4+a_2b_3+a_3b_2b_4=0.\\
\end{array} \right. 
\end{equation}

Therefore we have the following:
\begin{pro}\label{pro:struc-matrix}
Let $A$ be a two-dimensional algebra $A$ over $\mathbb F_2$ with structure matrix $\small{\begin{pmatrix}a_1&b_1\\a_2&b_2\\a_3&b_3\\a_4&b_4\end{pmatrix}}$.  Then $A$ is endo-commutative iff the eight scalars $a_1, b_1, a_2, b_2, a_3, b_3, a_4, b_4$ satisfy (\ref{eq:ec-struc}). 
\end{pro}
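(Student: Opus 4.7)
The plan is to verify the equivalence by a direct coefficient comparison. For arbitrary elements $x=x_1 e+x_2 f$ and $y=y_1 e+y_2 f$ of $A$, I would first substitute into the multiplication table dictated by the structure matrix. Because we work over $\mathbb F_2$ we have $x_i^2=x_i$ and $y_j^2=y_j$, so multiplying out and collecting by basis elements yields the closed forms $x^2=Ae+Bf$, $y^2=Ce+Df$, and $xy=Ee+Ff$, with $A,B,C,D,E,F$ the polynomials already exhibited in the excerpt.

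Next, I would form the two products $x^2 y^2$ and $(xy)^2$ using the structure matrix once more, and again reduce modulo $x_i^2=x_i$ and $y_j^2=y_j$. Endo-commutativity amounts to the pair of equations (\ref{eq:ec}) being polynomial identities on $\mathbb F_2^4$. Both sides of each equation are multilinear in the $x$- and $y$-coordinates of bidegree at most $(2,2)$, so after reduction they lie in the $9$-dimensional span of the monomials $X_1,\ldots,X_9$ (four of bidegree $(1,1)$, two each of bidegrees $(1,2)$ and $(2,1)$, and one of bidegree $(2,2)$). Lemma~\ref{lem:lin-indep} then converts each of the two equations in (\ref{eq:ec}) into nine scalar equations on the structure constants, giving $18$ equations in total.

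The final step is the pruning. Inspection of the coefficients on $X_1$ and $X_2$ shows that two equations in each group of nine are automatic, so only the fourteen equations labelled (i)--(xiv) remain. Observations (I), (II), (III) of the excerpt then collapse this further: (iii) is the $\mathbb F_2$-sum of (i) and (ii); (x) is the $\mathbb F_2$-sum of (viii) and (ix); and the pairs (iv) and (v), (vi) and (vii), (xi) and (xii), (xiii) and (xiv) are pairwise equivalent, each pair of members differing only by the simultaneous swaps $a_3\leftrightarrow a_4$ and $b_3\leftrightarrow b_4$. This reduces the system to the eight equations displayed in (\ref{eq:ec-struc}), which proves the proposition.

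The main obstacle is pure bookkeeping: each side of each equation must be expanded, grouped into the $X_i$-basis, and reduced using characteristic-two arithmetic, and the redundancies (I)--(III) must be verified term by term. None of this is conceptually deep, but it is tedious. The only non-routine ingredients are Lemma~\ref{lem:lin-indep}, which promotes a universally quantified identity on $\mathbb F_2^4$ into a finite list of coefficient equations, and the symmetry observation that many of the resulting coefficient equations come in equivalent pairs.
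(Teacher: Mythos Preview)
Your proposal is correct and follows essentially the same route as the paper: expand $x^2y^2$ and $(xy)^2$ in the basis $\{e,f\}$, reduce via $x_i^2=x_i$, $y_j^2=y_j$, compare coefficients along the nine monomials $X_1,\dots,X_9$ using Lemma~\ref{lem:lin-indep}, discard the four tautological equations, and collapse the remaining fourteen to eight via (I)--(III). One small caution: your swap explanation for the equivalences (iv)$\Leftrightarrow$(v), etc., is only a heuristic --- two conditions related by $a_3\leftrightarrow a_4$, $b_3\leftrightarrow b_4$ need not be logically equivalent in general --- but here the pairs do in fact reduce to identical equations over $\mathbb F_2$, so the conclusion stands.
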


\section{Endo-commutative curled algebras of dimension 2}\label{sec:ec-curled}
For any $a, b, c,d, \varepsilon, \delta\in \mathbb F_2$, we denote by $C(a, b, c, d; \varepsilon, \delta)$ the two-dimensional  algebra over $ \mathbb F_2$ with linear base $\{e, f\}$ defined by $\begin{pmatrix}\varepsilon&0\\0&\delta\\a&b\\c&d\end{pmatrix}$. Then we see that any curled algebra of dimension 2 over $\mathbb F_2$ can be described by 
  $C(a_0, b_0, c_0, d_0; \varepsilon_0, \delta_0)$.  But the reverse is not necessarily true.  In fact, the algebra $C(0, 0, 0, 1; 0, 0)$ is not curled.  The following lemma gives a necessary and sufficient condition for $C(a, b, c, d;  \varepsilon, \delta)$ to be curled.

\begin{lem}\label{lem:curled}
The algebra $C(a, b, c, d;  \varepsilon, \delta)$ is curled iff $\varepsilon+a+c=\delta+b+d$ holds.
\end{lem}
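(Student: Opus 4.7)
The plan is to work directly from the definition: $C(a,b,c,d;\varepsilon,\delta)$ is curled iff $x^{2}\in \mathbb F_{2}\cdot x$ for every $x$ in the algebra. The key simplification is that the underlying set is finite — over $\mathbb F_{2}$ a two-dimensional algebra has exactly four elements $0, e, f, e+f$ — so the universal condition reduces to checking just these four vectors.

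First I would dispose of the trivial cases. The element $0$ satisfies $0^{2}=0\in\mathbb F_{2}\cdot 0$ automatically. By the defining table of $C(a,b,c,d;\varepsilon,\delta)$ we have $e^{2}=\varepsilon e\in\mathbb F_{2}\cdot e$ and $f^{2}=\delta f\in\mathbb F_{2}\cdot f$, so these two elements also impose no condition. Hence the curled property is equivalent to $(e+f)^{2}\in\mathbb F_{2}\cdot(e+f)$.

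Next I would compute $(e+f)^{2}$ using the multiplication table. Expanding bilinearly,
\[
(e+f)^{2}=e^{2}+ef+fe+f^{2}=\varepsilon e+(ae+bf)+(ce+df)+\delta f=(\varepsilon+a+c)e+(b+d+\delta)f.
\]
Since $\{e,f\}$ is a linear base, this element lies in $\mathbb F_{2}\cdot(e+f)=\{0,e+f\}$ iff the coefficients of $e$ and $f$ are equal, i.e.\ iff $\varepsilon+a+c=b+d+\delta$. Combined with the three trivial cases, this yields the stated equivalence.

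There is no real obstacle here: the argument is a brief case-check relying only on the finiteness of $\mathbb F_{2}^{2}$ and the explicit structure matrix. The only small care needed is to observe that both scalars in $\mathbb F_{2}$ — namely $0$ and $1$ — are realized by the set $\mathbb F_{2}\cdot(e+f)$, so the condition $(e+f)^{2}\in\mathbb F_{2}\cdot(e+f)$ genuinely amounts to equality of the two coefficients rather than something stronger.
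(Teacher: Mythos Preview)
Your proof is correct and is exactly the direct verification the paper has in mind when it writes ``Straightforward'': with only the four vectors $0,e,f,e+f$ to check, the curled condition reduces to $(e+f)^2\in\mathbb F_2\cdot(e+f)$, which gives $\varepsilon+a+c=\delta+b+d$.
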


\begin{proof}
Straightforward.
\end{proof}
\begin{lem}\label{lem:ec-curled}
The algebra $C(a, b, c, d;  \varepsilon, \delta)$ is endo-commutative and curled iff the six scalars $a, b, c, d, \varepsilon, \delta$ satisfy the following:
\begin{equation}\label{eq:ec-curled}
\left\{\begin{array}{@{\,}lll}   
\varepsilon+\delta+a+b+c+d=0\\
a(\varepsilon\delta+\varepsilon+b+bc)=0\\
c(\varepsilon\delta+\varepsilon+ad+d)=0\\
\varepsilon(a+c+ad+bc)=0\\
b(\varepsilon \delta+\delta+a+ad)=0\\
d(\varepsilon \delta+\delta+bc+c)=0\\
\delta(bc+b+d+ad)=0.   
\end{array} \right. 
\end{equation}
\end{lem}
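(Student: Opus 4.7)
The plan is to read off the structure matrix of $C(a,b,c,d;\varepsilon,\delta)$ and feed it into the two characterizations we already have: Lemma~\ref{lem:curled} for the curled condition, and Proposition~\ref{pro:struc-matrix} for the endo-commutative condition. Concretely, the structure matrix forces
\[
a_1=\varepsilon,\quad b_1=0,\quad a_2=0,\quad b_2=\delta,\quad a_3=a,\quad b_3=b,\quad a_4=c,\quad b_4=d.
\]
The first equation of (\ref{eq:ec-curled}) is just Lemma~\ref{lem:curled}, rewritten over $\mathbb F_2$ by moving everything to one side of the equality $\varepsilon+a+c=\delta+b+d$. So curledness contributes exactly one equation to the claimed system.

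Next I would substitute the values above into the eight equations of system (\ref{eq:ec-struc}). The substitution is made easy by the fact that $a_2=0$ and $b_1=0$: the fourth equation of (\ref{eq:ec-struc}) has $a_2$ as a common factor in every monomial and the seventh equation has $b_1$ as a common factor in every monomial, so both of these hold automatically. The six remaining equations, after cancellation of the many terms containing $a_2$ or $b_1$, each factor in an obvious way and match the six equations (2)--(7) of (\ref{eq:ec-curled}) one-by-one. For instance, equation (i) of (\ref{eq:ec-struc}) collapses to $a\varepsilon\delta+a\varepsilon+ab+abc=a(\varepsilon\delta+\varepsilon+b+bc)=0$, which is the second equation of (\ref{eq:ec-curled}); equation (viii) collapses to $b(\varepsilon\delta+\delta+a+ad)=0$, which is the fifth; and so on.

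Putting the two pieces together, $C(a,b,c,d;\varepsilon,\delta)$ is simultaneously curled and endo-commutative if and only if the seven equations of (\ref{eq:ec-curled}) all hold. The only real work is the bookkeeping of the substitution and verifying that the pairing between the six nontrivial equations of (\ref{eq:ec-struc}) and equations (2)--(7) of (\ref{eq:ec-curled}) is correct; there is no conceptual obstacle beyond careful $\mathbb F_2$ arithmetic.
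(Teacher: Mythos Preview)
Your approach is exactly the one the paper uses: substitute $a_1=\varepsilon,\ b_1=0,\ a_2=0,\ b_2=\delta,\ a_3=a,\ b_3=b,\ a_4=c,\ b_4=d$ into system~(\ref{eq:ec-struc}), invoke Proposition~\ref{pro:struc-matrix} together with Lemma~\ref{lem:curled}, and observe that equations~4 and~7 of~(\ref{eq:ec-struc}) vanish identically while the remaining six yield equations (2)--(7) of~(\ref{eq:ec-curled}). One small bookkeeping slip: the eighth equation of~(\ref{eq:ec-struc}) actually collapses to $\delta(bc+b+d+ad)=0$, the \emph{seventh} line of~(\ref{eq:ec-curled}); it is the \emph{fifth} equation of~(\ref{eq:ec-struc}) that gives $b(\varepsilon\delta+\delta+a+ad)=0$.
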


\begin{proof}
Taking $a_1=\varepsilon, b_1=0, a_2=0, b_2=\delta, a_3=a, b_3=b, a_4=c$ and $b_4=d$ in (\ref{eq:ec-struc}),  we obtain the desired result from Proposition~\ref{pro:struc-matrix} and Lemma~\ref{lem:curled}.
\end{proof}
\vspace{2mm}

Put
\begin{align*}
&C_0=C(0, 0, 0, 0; 0, 0), C_1=C(0, 1, 0, 1; 0, 0), C_{1'}=C(1, 0, 1, 0; 0, 0), C_{1''}=C(1, 1, 1, 1; 0, 0),\\
& C_2=C(0, 1, 1, 0; 0, 0), C_3=C(1, 0, 0, 1; 0, 0), C_4=C(0, 1, 0, 0; 1, 0), C_5= C(1, 1, 0, 1; 1, 0),\\
&C_6=C(0, 1, 1, 1; 1, 0), C_7=C(0, 0, 0, 1; 1, 0), C_8=C(1, 1, 1, 0; 0, 1), C_9=C(1, 0, 1, 1; 0, 1),\\
&C_{10}=C(0, 0, 1, 0; 0, 1), C_{11}=C(1, 0, 0, 0; 0, 1), C_{12}=C(0, 0, 0, 0; 1, 1), C_{12'}=C(0, 1, 0,1; 1, 1),\\
&C_{12''}=C(1, 0, 1, 0; 1, 1), C_{13}= C(1, 1, 1, 1; 1, 1), C_{14}=C(0, 1, 1, 0; 1, 1), C_{15}=C(1, 0, 0, 1; 1, 1).
\end{align*}
and define
\[
\left\{\begin{array}{@{\,}lll}  
\mathcal{ECC}_{00}=\{C_0, C_1, C_{1'}, C_{1''}, C_2, C_3\}\\
\mathcal{ECC}_{10}=\{C_4, C_5, C_6, C_7\}\\
\mathcal{ECC}_{01}=\{C_8, C_9, C_{10}, C_{11}\}\\
\mathcal{ECC}_{11}=\{C_{12}, C_{12'}, C_{12''}, C_{13}, C_{14}, C_{15}\}.
\end{array} \right. 
\]
Then we have the following:
\begin{pro}\label{pro:ec-curled}
All algebras in $\mathcal {ECC}_{00}\cup\mathcal{ECC}_{10}\cup\mathcal {ECC}_{01}\cup\mathcal {ECC}_{11}$ are endo-commutative and curled.
Conversely, an arbitrary endo-commutative curled algebra of dimension 2 over $\mathbb F_2$ is isomorphic to either one of algebras in $\mathcal {ECC}_{00}\cup\mathcal{ECC}_{10}\cup\mathcal {ECC}_{01}\cup\mathcal {ECC}_{11}$.
\end{pro}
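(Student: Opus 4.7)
The proof splits into two directions, both of which route through Lemma~\ref{lem:ec-curled} to the system (\ref{eq:ec-curled}) of seven polynomial equations in the six parameters $a,b,c,d,\varepsilon,\delta \in \mathbb{F}_2$.

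\emph{Forward direction.} For each of the twenty algebras in the union $\mathcal{ECC}_{00}\cup\mathcal{ECC}_{10}\cup\mathcal{ECC}_{01}\cup\mathcal{ECC}_{11}$, I would substitute its explicit six-tuple into (\ref{eq:ec-curled}) and confirm each of the seven equations. Since all arithmetic is in $\mathbb{F}_2$, each check is a one-line computation. The first equation $\varepsilon+\delta+a+b+c+d=0$ in (\ref{eq:ec-curled}) is exactly the curled condition of Lemma~\ref{lem:curled}, so the same verification simultaneously certifies that each listed algebra is curled.

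\emph{Converse direction.} Let $A$ be an endo-commutative curled 2-dimensional algebra over $\mathbb{F}_2$ with linear base $\{e,f\}$. Because $A$ is curled, $e^2 = \varepsilon e$ and $f^2 = \delta f$ for some $\varepsilon,\delta \in \mathbb{F}_2$, and writing $ef = ae+bf$ and $fe = ce+df$ gives $A = C(a,b,c,d;\varepsilon,\delta)$. By Lemma~\ref{lem:ec-curled}, the tuple $(a,b,c,d,\varepsilon,\delta)$ satisfies (\ref{eq:ec-curled}), so the converse reduces to enumerating every solution. I would proceed by splitting on $(\varepsilon,\delta) \in \mathbb{F}_2^2$. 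In each case the system collapses dramatically: for $(\varepsilon,\delta) = (0,0)$, the 4th and 7th equations become trivial and the remaining five reduce to $a+b+c+d = 0$ together with $ab(1+c) = ab(1+d) = cd(1+a) = cd(1+b) = 0$; a sub-split on whether $ab$ and $cd$ are $0$ or $1$ then yields exactly the six tuples of $\mathcal{ECC}_{00}$. The cases $(1,0)$ and $(0,1)$ are exchanged by the basis swap $e \leftrightarrow f$, which acts on the parameters as $(\varepsilon,\delta,a,b,c,d) \mapsto (\delta,\varepsilon,d,c,b,a)$, so it suffices to analyze one of them and transport; each yields four tuples, matching $\mathcal{ECC}_{10}$ and $\mathcal{ECC}_{01}$ respectively. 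The case $(1,1)$ is slightly richer because $\delta(bc+b+d+ad)=0$ contributes a nontrivial constraint, but the same sub-split on the vanishing of $ab$ and $cd$ produces exactly the six tuples of $\mathcal{ECC}_{11}$.

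\emph{Main obstacle.} The only real difficulty is keeping the case-work bookkeeping honest: there are $2^6 = 64$ candidate tuples, and the seven polynomial equations have many redundancies. The key efficiency is the two-step organization — first split on $(\varepsilon,\delta)$ to eliminate entire equations, then split on the alternatives $ab \in \{0,1\}$ and $cd \in \{0,1\}$ (or their counterparts when $\varepsilon$ or $\delta$ is $1$) to reduce each sub-branch to linear parity constraints over $\mathbb{F}_2$. Once organized this way, each of the four cases dispatches in a few lines, and the main risk is simply miscounting or mislabeling tuples in a sub-branch; the $e \leftrightarrow f$ symmetry between the $(1,0)$ and $(0,1)$ cases provides a useful consistency check.
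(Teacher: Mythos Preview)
Your proposal is correct and follows essentially the same route as the paper: reduce to system (\ref{eq:ec-curled}) via Lemma~\ref{lem:ec-curled}, split on $(\varepsilon,\delta)$, and in each branch collapse the seven equations to a short list of parity/product constraints whose solutions are exactly the listed tuples. The only cosmetic differences are that the paper handles forward and converse simultaneously (showing ``(\ref{eq:ec-curled}) $\Leftrightarrow$ tuple is in the list'' in each branch rather than verifying the twenty tuples separately), and it treats the $(1,0)$ and $(0,1)$ cases independently rather than invoking your $e\leftrightarrow f$ symmetry --- but the underlying casework is identical.
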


\begin{proof}
If $(\varepsilon, \delta)=(0, 0)$, then 
\begin{align*}
(\ref{eq:ec-curled})&\Leftrightarrow\left\{\begin{array}{@{\,}lll}   
a+b+c+d=0\\
ab(1+c)=0\\
cd(a+1)=0\\
ab(1+d)=0\\
cd(b+1)=0.
\end{array} \right.\\
\end{align*}
If $ab\ne0$, then $c=d=1$ by the second and fourth equations above. Assume $ab=0$. If $(a,b)=(0,1)$ or $(1,0)$, then $(c,d)=(1,0)$ or $(0,1)$ by the first equation.
If $(a,b)=(0,0)$, then $(c,d)=(0,0)$ by the first and third equations.
Therefore, we have $(\ref{eq:ec-curled})\Leftrightarrow (a, b, c, d)\in\{(0, 0, 0, 0), (0, 1, 0, 1), (1, 0, 1, 0), (1, 1, 1, 1), (0, 1, 1, 0), (1, 0, 0, 1)\}$, 
and hence any algebra in $\mathcal{ECC}_{00}$ must be endo-commutative and curled by Lemma~\ref{lem:ec-curled}.  Similarly, if $(\varepsilon, \delta)=(1, 0)$, then 
\[
(\ref{eq:ec-curled})\Leftrightarrow(a, b, c, d)\in\{(0, 1, 0, 0), (1, 1, 0, 1), (0, 1, 1, 1), (0, 0, 0, 1)\}
\]
and hence any algebra in $\mathcal{ECC}_{10}$ must be endo-commutative and curled by the same lemma.  If $(\varepsilon, \delta)=(0, 1)$, then 
\[
(\ref{eq:ec-curled})\Leftrightarrow(a, b, c, d)\in\{(1, 1, 1, 0), (1, 0, 1, 1), (0, 0, 1, 0), (1, 0, 0, 0)\}
\]
 \ 
and hence any algebra in $\mathcal{ECC}_{01}$ must be endo-commutative and curled by the same lemma.  If $(\varepsilon, \delta)=(1, 1)$, then 
 \begin{align*}
(\ref{eq:ec-curled})
&\Leftrightarrow(a, b, c, d)\in\{(0, 0, 0, 0), (0, 1, 0, 1), (1, 0, 1, 0), (1, 1, 1, 1), (0, 1, 1, 0), (1, 0, 0, 1)\},
\end{align*}
 and hence any algebra in $\mathcal{ECC}_{11}$ must be endo-commutative and curled by the same lemma.  Therefore, the first half of the proposition has been proved.

Note that any curled algebra of dimension 2 over $\mathbb F_2$ must be isomorphic to some $C(a_0, b_0, c_0, d_0; \varepsilon_0, \delta_0)$.  Moreover, if this $C(a_0, b_0, c_0, d_0; \varepsilon_0, \delta_0)$ is endo-commutative, then $a_0, b_0, c_0, d_0,\varepsilon_0$ and $\delta_0$ must satisfy (\ref{eq:ec-curled}) by Lemma~\ref{lem:ec-curled}, and hence $C(a_0, b_0, c_0, d_0; \varepsilon_0, \delta_0)$ must be in $\mathcal {ECC}_{00}\cup\mathcal{ECC}_{10}\cup\mathcal {ECC}_{10}\cup\mathcal {ECC}_{11}$ from the above four calculations.  Therefore, the second half of the proposition has been proved.
\end{proof}
\vspace{1mm}

\section{Curled algebras of dimension 2: unital, commutative and associative cases}\label{sec:3special-curled}
In this section, we determine unital, commutative, or associative curled algebras of dimension 2 over $\mathbb F_2$.


(I) Unital case

First of all, we determine unital curled algebras of dimension 2 over $\mathbb F_2$.
A curled algebra $A=C(a, b, c, d; \varepsilon, \delta)$ is unital iff
\[
\exists u\in A : ue=eu=e\, \,{\rm{and}}\, \, uf=fu=f.
\]

Put $u=\alpha e+\beta f$. Then, the above equations are rewritten as

\[
(\sharp : \alpha, \beta)
\left\{\begin{array}{@{\,}lll} \alpha\varepsilon+\beta c=\alpha\varepsilon+\beta a=\alpha b+\beta\delta=\alpha d+\beta\delta=1\\
\beta d=\beta b=\alpha a=\alpha c=0.
\end{array} \right. 
\]
Hence, $A$ is unital iff there exist $\alpha, \beta\in\mathbb{F}_2$ satisfying $(\sharp :\alpha, \beta)$.

Now let us consider two cases. When $\alpha=0$, we have $(\sharp :\alpha, \beta)\Leftrightarrow \left\{\begin{array}{@{\,}lll}\beta=c=a=\delta=1\\ d=b=0 \end{array}\right.$.
Since $A$ is curled, we have $\varepsilon+a+c=\delta+b+d$ by Lemma~\ref{lem:curled}. Hence, $\varepsilon =1$ and so $A=C(1, 0, 1, 0; 1, 1)=C_{12''}$.
When $\alpha=1$, we have $(\sharp :\alpha, \beta)\Leftrightarrow\left\{\begin{array}{@{\,}lll}\beta d=c=a=0\\b+\beta\delta=\varepsilon=1, d=b. \end{array}\right.$Since $A$ is curled, we have $\varepsilon+a+c=\delta+b+d$ by Lemma~\ref{lem:curled}. Hence, $\delta=1$ and so $A=C_{12'}$ or $C_{12}$,
depending on the value of $\beta$.
Therefore, we have the following:

\begin{lem}\label{lem:unital-curled}
Suppose the algebra $C(a, b, c, d; \varepsilon, \delta)$ is curled.  Then $C(a, b, c, d; \varepsilon, \delta)$ is unital iff it is equal to either one of $C_{12}, C_{12'}$ and $C_{12''}$.
\end{lem}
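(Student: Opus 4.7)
The plan is to reduce unitality to a small system of scalar equations over $\mathbb F_2$. Writing a candidate identity as $u = \alpha e + \beta f$ with $\alpha, \beta \in \mathbb F_2$, the four conditions $ue = eu = e$ and $uf = fu = f$ expand --- using the multiplication rules $e^2 = \varepsilon e$, $f^2 = \delta f$, $ef = ae + bf$, $fe = ce + df$ --- into eight coefficient equations in $\alpha,\beta$ and the six structure constants, which is precisely the system $(\sharp : \alpha, \beta)$ displayed in the text. So $C(a,b,c,d;\varepsilon,\delta)$ is unital iff $(\sharp : \alpha, \beta)$ admits a solution for some $\alpha, \beta \in \mathbb F_2$.

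Since $\alpha$ takes only two values, I would split on $\alpha$. If $\alpha = 0$, the ``$=1$'' equations of $(\sharp)$ force $\beta = 1$ together with $a = c = \delta = 1$, and the ``$=0$'' equations force $b = d = 0$; then the curled constraint $\varepsilon + a + c = \delta + b + d$ from Lemma~\ref{lem:curled} pins down $\varepsilon = 1$, identifying the algebra with $C_{12''}$. If $\alpha = 1$, the ``$=0$'' equations give $a = c = 0$, hence $\varepsilon = 1$ from the first group, and the remaining equations yield $b = d = 1 + \beta\delta$. A secondary split on $\beta$, together with another application of Lemma~\ref{lem:curled} to force $\delta = 1$, delivers $C_{12'}$ when $\beta = 0$ and $C_{12}$ when $\beta = 1$. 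For the reverse implication, each of $C_{12}$, $C_{12'}$, $C_{12''}$ is directly verified to be unital by exhibiting an explicit identity in the corresponding multiplication table (namely $e+f$, $e$, and $f$, respectively).

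There is no deep obstacle here: the argument is pure case analysis on the two $\mathbb F_2$-values of $\alpha$, and once of $\beta$. The only real pitfall is the interplay between the $(\sharp)$ equations and the curled constraint --- without invoking Lemma~\ref{lem:curled} one could fail to pin $\varepsilon$ (resp.\ $\delta$) down to $1$, and thereby overlook the fact that all three resulting algebras lie in the $(\varepsilon,\delta) = (1,1)$ family. Keeping that bookkeeping tidy throughout the case split is the one thing that needs care.
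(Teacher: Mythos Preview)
Your proposal is correct and follows essentially the same approach as the paper: both set up the system $(\sharp:\alpha,\beta)$, split on the value of $\alpha$, and invoke the curled constraint from Lemma~\ref{lem:curled} to pin down the remaining parameter ($\varepsilon$ or $\delta$), arriving at exactly $C_{12''}$, $C_{12'}$, $C_{12}$. Your explicit exhibition of the identity elements for the reverse direction is a minor addition not spelled out in the paper, but it is consistent with (indeed already implicit in) the case analysis.
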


(II) Commutative case

Next, we determine commutative curled algebras of dimension 2 over $\mathbb F_2$.

Let $A=C(a, b, c, d; \varepsilon, \delta)$.
Put $\left\{\begin{array}{@{\,}lll} x=x_1e+x_2f\\y=y_1e+y_2f,\end{array} \right.$  where $x_1, x_2, y_1, y_2\in\mathbb F_2$.  Then we see easily that 
\[
xy=yx\Leftrightarrow\left\{\begin{array}{@{\,}lll} x_1y_2a+x_2y_1c=y_1x_2a+y_2x_1c\\x_1y_2b+x_2y_1d=y_1x_2b+y_2x_1d.\end{array} \right.
\]
Then we see from Lemma~\ref{lem:lin-indep} that $A$ is commutative iff $a=c$ and $b=d$.  If $A$ is commutative and curled, it follows from Lemma~\ref{lem:curled} and the above argument that $\varepsilon=\varepsilon+a+c=\delta+ b+d=\delta$.  Hence, $A$ is equal to either one of the following eight algebras: $C_0, C_1, C_{1'}, C_{1''}, C_{12}, C_{12'}, C_{12''}, C_{13}$.  Therefore, we have

\begin{lem}\label{lem:comm-curled}
Suppose the algebra $C(a, b, c, d; \varepsilon, \delta)$ is curled.  Then $C(a, b, c, d; \varepsilon, \delta)$ is commutative iff it is equal to either one of $C_0, C_1, C_{1'}, C_{1''}, C_{12}, C_{12'}, C_{12''}$ and $C_{13}$.
\end{lem}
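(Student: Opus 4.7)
The plan is to separately characterize commutativity and curledness as constraints on the six parameters $(a,b,c,d,\varepsilon,\delta)$, intersect them, and enumerate the surviving algebras.

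For the commutativity step, I would take generic $x=x_1e+x_2f$ and $y=y_1e+y_2f$ with $x_i,y_j\in\mathbb F_2$, expand $xy$ and $yx$ using the multiplication table of $C(a,b,c,d;\varepsilon,\delta)$, and impose $xy=yx$ for every choice of coefficients. The $e^2$ and $f^2$ contributions appear symmetrically in $xy$ and $yx$, so $\varepsilon$ and $\delta$ drop out of the commutator and only the off-diagonal $ef$ and $fe$ terms survive. Separating the resulting expression by the monomials $x_1y_2$ and $x_2y_1$ (which are linearly independent by Lemma~\ref{lem:lin-indep}) should collapse commutativity to the pair of scalar equalities $a=c$ and $b=d$, i.e. $ef=fe$.

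Next I would feed this into the curled hypothesis $\varepsilon+a+c=\delta+b+d$ supplied by Lemma~\ref{lem:curled}. Under $a=c$ and $b=d$, each side reduces modulo $2$ to $\varepsilon$ and $\delta$ respectively, forcing $\varepsilon=\delta$. A commutative curled algebra is therefore determined by a single triple $(a,b,\varepsilon)\in\mathbb F_2^3$ via $c=a$, $d=b$, $\delta=\varepsilon$, producing exactly eight candidates.

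Finally I would read these eight triples off against the declared list. The four cases with $\varepsilon=0$ and $(a,b)\in\{(0,0),(0,1),(1,0),(1,1)\}$ reproduce $C_0,C_1,C_{1'},C_{1''}$ respectively, and the four with $\varepsilon=1$ reproduce $C_{12},C_{12'},C_{12''},C_{13}$. The converse implication is immediate: each listed algebra satisfies $a=c$ and $b=d$ by inspection, so $ef=fe$ and the algebra is commutative. I do not anticipate any genuine obstacle; the only point requiring a little care is to ensure that Lemma~\ref{lem:lin-indep} is correctly invoked to force pointwise vanishing of the relevant structure constants rather than a weaker polynomial identity, which is precisely its role.
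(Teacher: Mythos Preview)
Your proposal is correct and follows essentially the same route as the paper: derive $a=c$, $b=d$ from commutativity via linear independence of $x_1y_2$ and $x_2y_1$, combine with Lemma~\ref{lem:curled} to force $\varepsilon=\delta$, and enumerate the eight parameter triples. The only cosmetic difference is that the paper does not spell out the converse direction separately, treating it as implicit in the ``iff'' derivation.
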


(III) Associative case

Finally, we determine associative curled algebras of dimension 2 over $\mathbb F_2$.

Let $A=C(a, b, c, d; \varepsilon, \delta)$. 
Put $\left\{\begin{array}{@{\,}lll} x=x_1e+x_2f\\y=y_1e+y_2f\\z=z_1e+z_2f,
\end{array} \right. X_1=x_1y_1\varepsilon+x_1y_2a+x_2y_1c, X_2=x_1y_2b+x_2y_1d+x_2y_2\delta, Y_1=y_1z_1\varepsilon+y_1z_2a+y_2z_1c$ and $Y_2=y_1z_2b+y_2z_1d+y_2z_2\delta$. Then we see easily that $A$ is associative iff
\[
\left\{\begin{array}{@{\,}lll} 
X_1z_1\varepsilon+X_1z_2a+X_2z_1c=x_1Y_1\varepsilon+x_1Y_2a+x_2Y_1c\cdots(\sharp_1)\\
X_1z_2b+X_2z_1d+X_2z_2\delta=x_1Y_2b+x_2Y_1d+x_2Y_2\delta\cdots(\sharp_2)
\end{array} \right. 
\]
holds for all $x_i, y_i, z_i\in\mathbb F_2\, \, (1\le i\le2)$.  Put
\begin{align*}
&Z_1=x_1y_1z_1, Z_2=x_1y_1z_2, Z_3=x_1y_2z_1, Z_4=x_1y_2z_2,\\ &Z_5=x_2y_1z_1, Z_6=x_2y_1z_2, Z_7=x_2y_2z_1, Z_8=x_2y_2z_2.
\end{align*}
\begin{lem}\label{lem:lin-indep2}
The eight polynomials $Z_i\, \, (1\le i\le8)$ are linearly independent over $\mathbb{F}_2$.
\end{lem}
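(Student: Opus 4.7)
The plan is to observe that $Z_1, \dots, Z_8$ are simply the eight pairwise distinct multilinear monomials $x_i y_j z_k$ with $(i,j,k)\in\{1,2\}^3$ in the polynomial ring $\mathbb F_2[x_1,x_2,y_1,y_2,z_1,z_2]$, so their linear independence is immediate from the fact that distinct monomials form a basis of a polynomial ring over any field. Since the paper's Lemma~\ref{lem:lin-indep} was dismissed as ``Straightforward,'' the same one-line proof should suffice here.

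If a slightly more explicit argument is preferred, the plan is to exhibit a dual family of evaluations detecting each $Z_i$. Given a hypothetical relation $\sum_{i=1}^{8}\lambda_i Z_i = 0$ with $\lambda_i \in \mathbb F_2$, fix any $(i_0,j_0,k_0)\in\{1,2\}^3$ and specialize $x_{i_0}=y_{j_0}=z_{k_0}=1$ while setting the other variable in each of the three pairs to $0$. Then exactly one monomial, namely $x_{i_0}y_{j_0}z_{k_0}$, evaluates to $1$ and the remaining seven evaluate to $0$, so the specialization forces the corresponding coefficient $\lambda$ to vanish. Running over all eight choices of $(i_0,j_0,k_0)$ makes every $\lambda_i$ equal to $0$.

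No real obstacle is expected; the only thing to be careful about is to not confuse the $Z_i$ with the nine polynomials $X_i$ of Lemma~\ref{lem:lin-indep}, since there only three pairs of variables appear (as opposed to the two pairs there), and the $X_i$ included non-multilinear monomials such as $x_1x_2y_1y_2$ whereas here each $Z_i$ is multilinear of degree three. This distinction makes the present lemma, if anything, even more transparent than Lemma~\ref{lem:lin-indep}.
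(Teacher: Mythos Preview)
Your proposal is correct and matches the paper's approach: the paper's own proof is simply ``Straightforward,'' and your argument (distinct monomials in $\mathbb F_2[x_1,x_2,y_1,y_2,z_1,z_2]$, or equivalently the evaluation argument isolating each $Z_i$) is precisely the routine verification that word encapsulates.
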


\begin{proof}
Straightforward.
\end{proof}

Note that $(\sharp_1)$ is rewritten as
\begin{align*}
&Z_1\varepsilon+Z_2\varepsilon a+Z_3(a\varepsilon+bc)+Z_4a+Z_5(c\varepsilon+dc)+Z_6ca+Z_7\delta c\\
&=Z_1\varepsilon+Z_2(a\varepsilon+ba)+Z_3(c\varepsilon+da)+Z_4\delta a+Z_5\varepsilon c+Z_6ac+Z_7c.
\end{align*}
Then we see easily from Lemma~\ref{lem:lin-indep2} that $(\sharp_1)$ holds for all $x_i, y_i, z_i\in\mathbb F_2\, \, (1\le i\le2)$ iff $ab=0, a\varepsilon+bc=c\varepsilon+ad, a=\delta a, dc=0$ and $\delta c=c$.  Similarly, note that $(\sharp_2)$ is rewritten as
\begin{align*}
&Z_2\varepsilon b+Z_3bd+Z_4(ab+b\delta)+Z_5d+Z_6(cb+d\delta)+Z_7\delta d+Z_8\delta \\
&=Z_2b+Z_3bd+Z_4\delta b+Z_5\varepsilon d+Z_6(ad+b\delta)+Z_7(cd+d\delta)+Z_8\delta.
\end{align*}
Then we see easily from Lemma~\ref{lem:lin-indep2} that $(\sharp_2)$ holds for all $x_i, y_i, z_i\in\mathbb F_2\, \, (1\le i\le2)$ iff $\varepsilon b=b, ab=0, d=\varepsilon d, cb+d\delta=ad+b\delta\, \, {\rm{and}}\, \, cd=0$.  Therefore, $A$ is associative iff
\[
(\sharp)\equiv
[ab=0, a\varepsilon+bc=c\varepsilon+ad, a=\delta a, dc=0, \delta c=c, \varepsilon b=b, d=\varepsilon d, cb+d\delta=ad+b\delta].
\]
When $\delta=0$, we see that $(\sharp)$ iff $\left\{\begin{array}{@{\,}lll}
a=c=0\\\varepsilon b=b, \varepsilon d=d.
\end{array}\right. $
On the other hand, if $A$ is curled, $\varepsilon=b+d$ since $\varepsilon+a+c=\delta+b+d$ by Lemma~\ref{lem:curled}.
Therefore, if $\varepsilon=1$, $(a, b, c, d)=(0, 0, 0, 1), (0, 1, 0, 0)$. If $\varepsilon=0$, $(a, b, c, d)=(0, 0, 0, 0)$.
Hence, in this case, $A$ is equal to either one of $\{C_0, C_4, C_7\}$.  When $\delta=1$, we see similarly that $A$ is equal to either one of $\{C_{10}, C_{11}, C_{12}, C_{12'}, C_{12''}, C_{14},  C_{15}\}$.  Hence we have

\begin{lem}\label{lem:assoc-curled}
Suppose the algebra $C(a, b, c, d; \varepsilon, \delta)$ is curled.  Then $C(a, b, c, d; \varepsilon, \delta)$ is associative iff it is equal to either one of $C_0, C_4, C_7, C_{10}, C_{11}, C_{12}, C_{12'}, C_{12''}, C_{14}$ and $C_{15}$.
\end{lem}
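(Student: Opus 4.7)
The plan is to finish the case analysis starting from the associativity condition $(\sharp)$ already derived in the preceding discussion, combined with the curled condition $\varepsilon+a+c=\delta+b+d$ from Lemma~\ref{lem:curled}. The text has already completed the branch $\delta=0$, producing exactly $\{C_0,C_4,C_7\}$, so the remaining work is the symmetric-looking but slightly more intricate branch $\delta=1$.

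First I would substitute $\delta=1$ into $(\sharp)$. The three conditions $a=\delta a$, $\delta c=c$, and the rewriting of the last equation all become vacuous, so the surviving constraints are $ab=0$, $dc=0$, $a\varepsilon+bc=c\varepsilon+ad$, $\varepsilon b=b$, $d=\varepsilon d$, and $cb+d=ad+b$, together with the curled relation $\varepsilon+a+c=1+b+d$. Then I would split on $\varepsilon$. In the subcase $\varepsilon=0$, the equations $\varepsilon b=b$ and $d=\varepsilon d$ force $b=d=0$; all other constraints collapse automatically, and the curled relation reduces to $a+c=1$. This yields the two solutions $(a,b,c,d)=(0,0,1,0)$ and $(1,0,0,0)$, namely $C_{10}$ and $C_{11}$.

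The main work is the subcase $\varepsilon=\delta=1$, where the $\varepsilon$-constraints become trivial and the system reduces to
\[
ab=0,\quad dc=0,\quad a+bc=c+ad,\quad cb+d=ad+b,\quad a+b+c+d=0.
\]
Here I would enumerate the four sub-subcases arising from $ab=0$ and $dc=0$: $(a=0,c=0)$, $(a=0,d=0)$, $(b=0,c=0)$, $(b=0,d=0)$. In each sub-subcase the remaining two linear/quadratic equations combined with $a+b+c+d=0$ force a short list of tuples: respectively $\{(0,0,0,0),(0,1,0,1)\}$, $\{(0,0,0,0),(0,1,1,0)\}$, $\{(0,0,0,0),(1,0,0,1)\}$, $\{(0,0,0,0),(1,0,1,0)\}$. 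Taking the union gives exactly $\{C_{12},C_{12'},C_{12''},C_{14},C_{15}\}$.

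Combining the two values of $\delta$ produces the ten listed algebras, and since every step is an equivalence (the derivation of $(\sharp)$ is iff, and the case analysis exhaustively enumerates solutions), the converse direction is immediate. The main obstacle is simply the bookkeeping in the $\varepsilon=\delta=1$ branch: care is needed because each sub-subcase overlaps with the others on the trivial solution $(0,0,0,0)$, and one must verify no new tuple sneaks in through an overlap. Once the enumeration is laid out in a small table, the proof reduces to a routine verification that each tuple in the union corresponds to one of the named $C_i$'s and that no solution was missed.
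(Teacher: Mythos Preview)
Your proposal is correct and follows essentially the same route as the paper, which for $\delta=1$ merely writes ``we see similarly that $A$ is equal to either one of $\{C_{10}, C_{11}, C_{12}, C_{12'}, C_{12''}, C_{14}, C_{15}\}$'' without spelling out the computation; your split on $\varepsilon$ and the four overlapping sub-subcases from $ab=0$, $dc=0$ is exactly the kind of direct enumeration the paper has in mind. One small slip: with $\delta=1$ only the two conditions $a=\delta a$ and $\delta c=c$ become vacuous, not three---the last equation $cb+d\delta=ad+b\delta$ survives as $cb+d=ad+b$, which you in fact correctly keep in your list of constraints, so the miscount is purely cosmetic and does not affect the argument.
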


The following result immediately follows from Lemmas~\ref{lem:unital-curled}, \ref{lem:comm-curled} and \ref{lem:assoc-curled}.
\begin{pro}\label{pro:3special-curled}
Let $A$ be a curled algebra of dimension 2 over $\mathbb F_2$.  Then

${\rm{(i)}}$  $A$ is unital iff it is isomorphic to either one of $C_{12}, C_{12'}$ and $C_{12''}$.

${\rm{(ii)}}$ $A$ is commutative iff it is isomorphic to either one of $C_0, C_1, C_{1'}, C_{1''}, C_{12},\\ C_{12'}, C_{12''}$ and $C_{13}$.

${\rm{(iii)}}$  $A$  is associative iff it is isomorphic to either one of $C_0, C_4, C_7, C_{10}, C_{11}, C_{12},\\ C_{12'}, C_{12''}, C_{14}$ and $C_{15}$.\end{pro}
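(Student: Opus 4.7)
The plan is to assemble Proposition~\ref{pro:3special-curled} directly from Lemmas~\ref{lem:unital-curled}, \ref{lem:comm-curled}, and \ref{lem:assoc-curled}, together with two routine facts: (1) every two-dimensional curled algebra over $\mathbb F_2$ admits a linear base on which its structure matrix takes the form $\begin{pmatrix}\varepsilon&0\\0&\delta\\a&b\\c&d\end{pmatrix}$ with the curled constraint $\varepsilon+a+c=\delta+b+d$ (i.e.\ it is isomorphic to some $C(a, b, c, d; \varepsilon, \delta)$), as already noted in Section~\ref{sec:ec-curled}; and (2) each of unitality, commutativity, and associativity is preserved under algebra isomorphism, which is immediate from the definitions.

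For the forward direction of each of (i), (ii), (iii), I would proceed as follows. Let $A$ be a curled two-dimensional $\mathbb F_2$-algebra and pick an isomorphism $A\cong C(a_0, b_0, c_0, d_0; \varepsilon_0, \delta_0)$. If $A$ is unital (resp.\ commutative, resp.\ associative), then so is $C(a_0, b_0, c_0, d_0; \varepsilon_0, \delta_0)$, since each property transfers along isomorphisms. Now Lemma~\ref{lem:unital-curled} (resp.\ Lemma~\ref{lem:comm-curled}, resp.\ Lemma~\ref{lem:assoc-curled}) specifies the complete list of choices of $(a_0, b_0, c_0, d_0; \varepsilon_0, \delta_0)$ that yield a unital (resp.\ commutative, resp.\ associative) curled algebra, and this list is precisely the set of $C_i$'s appearing in the three clauses of the proposition. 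Hence $A$ is isomorphic to one of the listed algebras.

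For the converse direction in each part, one verifies that every algebra on the list has the stated property: the same three lemmas already establish that the listed $C_i$'s are unital/commutative/associative, so any $A$ isomorphic to one of them inherits the property by invariance. Together, these two directions give the ``iff'' in (i), (ii), (iii).

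There is no real obstacle; the substantive casework has already been absorbed into the three lemmas. The only point that requires a moment of care is making sure the statement uses \emph{isomorphism} rather than equality, which is handled simply by appealing to the isomorphism invariance of the three properties and to the fact that every curled two-dimensional algebra is isomorphic to some $C(a, b, c, d; \varepsilon, \delta)$ so that Lemmas~\ref{lem:unital-curled}--\ref{lem:assoc-curled} apply.
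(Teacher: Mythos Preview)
Your proposal is correct and matches the paper's approach exactly: the paper simply states that the proposition ``immediately follows from Lemmas~\ref{lem:unital-curled}, \ref{lem:comm-curled} and \ref{lem:assoc-curled},'' and what you have written just spells out the routine bridging steps (isomorphism invariance of the three properties, and the fact that every curled two-dimensional algebra is isomorphic to some $C(a,b,c,d;\varepsilon,\delta)$) that the paper leaves implicit.
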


\begin{cor}\label{cor:3special-curled}
Suppose that $A$ is a two-dimensional curled algebra over $\mathbb{F}_2$.  If $A$ is unital, commutative or associative, then $A$ is necessarily endo-commutative.
\end{cor}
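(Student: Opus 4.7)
The plan is a direct comparison of explicit finite lists: the proof is essentially bookkeeping, since all of the substantive work has already been carried out in Propositions~\ref{pro:ec-curled} and \ref{pro:3special-curled}.

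First I would invoke Proposition~\ref{pro:3special-curled} to replace the hypothesis ``$A$ is unital, commutative, or associative'' by the more concrete statement that $A$ is isomorphic to some algebra in the union
\[
\mathcal{S} \;=\; \{C_0, C_1, C_{1'}, C_{1''}, C_4, C_7, C_{10}, C_{11}, C_{12}, C_{12'}, C_{12''}, C_{13}, C_{14}, C_{15}\}
\]
obtained by concatenating the three explicit lists of parts (i), (ii), (iii) of that proposition.

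Next I would check, by direct inspection against the definitions of $\mathcal{ECC}_{00}, \mathcal{ECC}_{10}, \mathcal{ECC}_{01}, \mathcal{ECC}_{11}$, that every member of $\mathcal{S}$ lies in $\mathcal{ECC}_{00}\cup\mathcal{ECC}_{10}\cup\mathcal{ECC}_{01}\cup\mathcal{ECC}_{11}$: namely $C_0, C_1, C_{1'}, C_{1''}\in\mathcal{ECC}_{00}$; $C_4, C_7\in\mathcal{ECC}_{10}$; $C_{10}, C_{11}\in\mathcal{ECC}_{01}$; and $C_{12}, C_{12'}, C_{12''}, C_{13}, C_{14}, C_{15}\in\mathcal{ECC}_{11}$. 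Proposition~\ref{pro:ec-curled} then certifies that every element of $\mathcal{S}$ is endo-commutative.

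Finally I would note that endo-commutativity is manifestly preserved under algebra isomorphism, since the defining identity $x^2y^2=(xy)^2$ transfers along any isomorphism $\Phi$ via $\Phi(x)^2\Phi(y)^2=\Phi(x^2y^2)=\Phi((xy)^2)=(\Phi(x)\Phi(y))^2$. Thus $A$, being isomorphic to a member of $\mathcal{S}$, is endo-commutative. There is no real obstacle beyond making sure every algebra in each of the three lists from Proposition~\ref{pro:3special-curled} genuinely appears among those enumerated in Proposition~\ref{pro:ec-curled}; this is a finite check which the above matching confirms.
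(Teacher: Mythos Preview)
Your proof is correct and follows exactly the route the paper intends: the corollary is stated without proof as an immediate consequence of Propositions~\ref{pro:ec-curled} and~\ref{pro:3special-curled}, and you have simply written out the finite check that the union of the three lists in Proposition~\ref{pro:3special-curled} is contained in $\mathcal{ECC}_{00}\cup\mathcal{ECC}_{10}\cup\mathcal{ECC}_{01}\cup\mathcal{ECC}_{11}$, together with the observation that endo-commutativity is isomorphism-invariant.
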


\section{Classification of endo-commutative curled algebras of dimension 2}\label{sec:class-ec-curled}
In this section, we classify endo-commutative curled algebras of dimension 2 over $\mathbb F_2$ by investigating isomorphism of each pair of algebras appearing in 
$\mathcal {ECC}_{00}\cup\mathcal{ECC}_{10}\cup\mathcal {ECC}_{01}\cup\mathcal {ECC}_{11}$. First of all, note that $C_0$ is not isomorphic to any of the other algebras since it is
the zero algebra.

 \begin{lem}\label{lem:iso-ec-curled1}
{\rm{(i)}} $C_1\cong C_{1'}$ and $C_1\cong C_{1''}$.

{\rm{(ii)}} $C_5\cong C_3$and $C_6\cong C_2$.

{\rm{(iii)}}  $C_8\cong C_6, C_9\cong C_5, C_{10}\cong C_4$ and $C_{11}\cong C_{7}$.

{\rm{(iv)}}  $C_{12}\cong C_{12'}, C_{12}\cong C_{12''}, C_{14}\cong C_{4}$ and $C_{15}\cong C_7$.
 \end{lem}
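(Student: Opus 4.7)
The plan is to apply Proposition~\ref{pro:iso-criterion} directly. For each asserted isomorphism $C_i \cong C_j$, the task reduces to exhibiting an $X \in GL_2(\mathbb{F}_2)$ with
\[
(\text{structure matrix of }C_j) = \widetilde{X^{-1}}\cdot(\text{structure matrix of }C_i)\cdot X.
\]
Since $GL_2(\mathbb{F}_2)$ has only the six elements
\[
I,\ \begin{pmatrix}0&1\\1&0\end{pmatrix},\ \begin{pmatrix}1&1\\0&1\end{pmatrix},\ \begin{pmatrix}1&0\\1&1\end{pmatrix},\ \begin{pmatrix}1&1\\1&0\end{pmatrix},\ \begin{pmatrix}0&1\\1&1\end{pmatrix},
\]
the search for a witness is finite and each verification is just short $\mathbb{F}_2$-arithmetic.

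For each of the twelve claimed isomorphisms I will select a candidate $X$ based on how the two multiplication tables differ. For the ``symmetric'' pairs $C_1 \cong C_{1'}$ in~(i) and $C_8 \cong C_6$, $C_9 \cong C_5$, $C_{10} \cong C_4$, $C_{11} \cong C_7$ in~(iii), the tables are obtained from one another by interchanging $e$ and $f$, so the transposition matrix $\begin{pmatrix}0&1\\1&0\end{pmatrix}$ should work. For pairs whose tables differ by adding one basis vector to the other, including $C_1 \cong C_{1''}$, both isomorphisms in~(ii), and $C_{12} \cong C_{12'}$, $C_{12} \cong C_{12''}$, $C_{14} \cong C_4$, $C_{15} \cong C_7$ in~(iv), I will try the unipotent shears $\begin{pmatrix}1&1\\0&1\end{pmatrix}$ or $\begin{pmatrix}1&0\\1&1\end{pmatrix}$ first, falling back to one of the remaining two elements of $GL_2(\mathbb{F}_2)$ if these do not yet match.

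Verification for each chosen $X$ is routine: compute $\widetilde{X^{-1}}$ from the formula preceding Lemma~\ref{lem:homo}, multiply the $4\times 2$ structure matrix on the left by $\widetilde{X^{-1}}$ and on the right by $X$, and compare entrywise modulo $2$. Equivalently, one may translate each candidate $X$ into the corresponding change of basis $e\mapsto ae'+bf'$, $f\mapsto ce'+df'$, recompute the four products $e^2,f^2,ef,fe$ in the new basis, and read off the target multiplication table; this is often quicker by hand. I expect the principal obstacle to be purely bookkeeping: producing twelve witness matrices together with their verifications cleanly and without clerical error. No deeper structural insight is required, because once a correct $X$ is exhibited the identity $A' = \widetilde{X^{-1}} A X$ is established by direct computation over $\mathbb{F}_2$.
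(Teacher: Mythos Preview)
Your approach is exactly that of the paper: exhibit an explicit $X\in GL_2(\mathbb F_2)$ for each pair and verify $\widetilde X A' = A X$ (equivalently $A'=\widetilde{X^{-1}}AX$) by direct $\mathbb F_2$-arithmetic. The only minor discrepancy is that for $C_2\cong C_6$ and for all four isomorphisms in~(iv) the paper's witnesses are the order-$3$ matrices $\begin{pmatrix}0&1\\1&1\end{pmatrix}$ and $\begin{pmatrix}1&1\\1&0\end{pmatrix}$ rather than the unipotent shears you plan to try first, so your ``fallback'' will indeed be needed there.
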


\begin{proof}
(i)  Let  $X_1=\begin{pmatrix}0&1\\1&0\end{pmatrix}$.  Then we see
 $\widetilde {X_1}C(1, 0, 1, 0; 0, 0)=C(0, 1, 0, 1; 0, 0)X_1$, and hence $C_1\cong C_{1'}$.  Let $X_2=\begin{pmatrix}1&0\\1&1\end{pmatrix}$.  Then we see $\widetilde{X_2}C(1, 1, 1, 1; 0, 0)=C(0, 1, 0, 1; 0, 0)X_2$, and hence $C_1\cong C_{1''}$.
\vspace{2mm}

(ii)  Let  $X_1=\begin{pmatrix}1&1\\0&1\end{pmatrix}$.  Then we see
 $\widetilde {X_1}C(1, 1, 0, 1; 1, 0)=C(1, 0, 0, 1; 0, 0)X_1$, and hence $C_3\cong C_5$.   Let  $X_2=\begin{pmatrix}0&1\\1&1\end{pmatrix}$.  Then we see
 $\widetilde {X_2}C(0, 1, 1, 1; 1, 0)=C(0, 1, 1, 0; 0, 0)X_2$, and hence $C_2\cong C_6$.  
 
 (iii) Let $X=\begin{pmatrix}0&1\\1&0\end{pmatrix}$.  Then we see that
$\widetilde XC_{8}=C_6X,  \widetilde XC_{9}=C_5X, \widetilde XC_{10}=C_4X$ and $\widetilde XC_{11}=C_7X$, and hence  $C_6\cong C_8, C_5\cong C_9, C_4\cong C_{10}$ and $C_{7}\cong C_{11}$.

(iv)  Let  $X_1=\begin{pmatrix}0&1\\1&1\end{pmatrix}$.  Then we see that $\widetilde {X_1}C_{12'}=C_{12}X_1, \widetilde {X_1}C_{14}=C_{4}X_1$ and  $\widetilde {X_1}C_{15}=C_7X_1$, hence $C_{12}\cong C_{12'}, C_4\cong C_{14}$ and $C_7\cong C_{15}$.  Let $X_2=\begin{pmatrix}1&1\\1&0\end{pmatrix}$.  Then we see $\widetilde{X_2}C(1, 0, 1, 0; 1, 1)=C(0, 0, 0, 0; 1, 1)X_2$, and hence $C_{12}\cong C_{12''}$.  
\end{proof}
\vspace{2mm}

\begin{lem}\label{lem:ECC00}
Up to isomorphism, we have $\mathcal{ECC}_{00}=\{C_0, C_1, C_2, C_3\}$.
\end{lem}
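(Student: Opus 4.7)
The lemma asserts that the six-element set $\mathcal{ECC}_{00} = \{C_0, C_1, C_{1'}, C_{1''}, C_2, C_3\}$ collapses to four isomorphism classes, represented by $C_0, C_1, C_2, C_3$. The plan is to first collapse $C_1, C_{1'}, C_{1''}$ to a single class using what is already established, and then to verify that the four surviving representatives are pairwise non-isomorphic by supplying a distinguishing invariant for each pair.

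First I would invoke Lemma~\ref{lem:iso-ec-curled1}(i), which already provides the isomorphisms $C_1 \cong C_{1'}$ and $C_1 \cong C_{1''}$. This immediately reduces $\mathcal{ECC}_{00}$ to at most the four classes $[C_0], [C_1], [C_2], [C_3]$, so only the pairwise non-isomorphism remains to be checked.

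Next I would argue the non-isomorphisms. The algebra $C_0$ is the zero algebra, so its structure matrix is the zero matrix; this is clearly not equivalent to any nonzero structure matrix and separates $C_0$ from the rest. To separate $C_1$ from $C_2$ and $C_3$, I would apply Corollary~\ref{cor:rank}: the structure matrix of $C_1=C(0,1,0,1;0,0)$ has rank $1$, whereas the structure matrices of $C_2=C(0,1,1,0;0,0)$ and $C_3=C(1,0,0,1;0,0)$ have rank $2$. Hence $C_1$ cannot be isomorphic to either $C_2$ or $C_3$.

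The main obstacle is distinguishing $C_2$ from $C_3$, since they have the same structure-matrix rank. I would exploit a left/right asymmetry. A direct computation in $C_2$ gives $(e+f)e=e^2+fe=e$ and $(e+f)f=ef+f^2=f$, so $e+f$ is a left identity of $C_2$. In $C_3$, any element $u=\alpha e+\beta f$ satisfies $ue=\alpha e^2+\beta fe=\beta f$, which can never equal $e$; so $C_3$ has no left identity. Since the existence of a left identity is an isomorphism invariant (an isomorphism carries left identities to left identities), we conclude $C_2\not\cong C_3$. Combining the three distinctions completes the proof.
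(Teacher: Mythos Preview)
Your proof is correct and largely parallels the paper's: both invoke Lemma~\ref{lem:iso-ec-curled1}(i) to collapse $C_{1'}, C_{1''}$ onto $C_1$, both dispose of $C_0$ as the zero algebra, and both use the rank invariant (Corollary~\ref{cor:rank}) to separate $C_1$ from $C_2$ and $C_3$. The one genuine difference is how you handle $C_2\ncong C_3$. The paper argues by brute force: it assumes a transformation matrix $X=\begin{pmatrix}a&b\\c&d\end{pmatrix}$ satisfies $\widetilde{X}C_3=C_2X$, extracts the resulting scalar equations $[ab=cd=0,\ ad=c,\ bc=d,\ bc=a,\ ad=b]$, and derives $|X|=0$ in each case. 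You instead exhibit a structural invariant, observing that $e+f$ is a left identity in $C_2$ while $C_3$ admits none. Your route is cleaner and more conceptual for this particular pair (and, incidentally, dual: $e+f$ is a \emph{right} identity in $C_3$, reflecting that $C_2$ and $C_3$ are opposite algebras). The paper's transformation-matrix method, on the other hand, is the uniform workhorse it deploys throughout Sections~\ref{sec:class-ec-curled} and~\ref{sec:class-ec-straight} whenever no ready-made invariant is at hand.
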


\begin{proof}
  By Lemma~\ref{lem:iso-ec-curled1} (i), it suffices to show that no two of $C_1, C_2, C_3$ are isomorphic to each other.

(i)  $C_1\ncong C_2, C_3$.  Since ${\rm{rank}}\, C_1=1$ and ${\rm{rank}}\, C_2={\rm{rank}}\, C_3=2$, it follows from Corollary~\ref{cor:rank} that $C_1\ncong C_2$ and $C_1\ncong C_3$.

(ii) $C_2\ncong C_3$.  In fact, suppose there is a transformation matrix $X=\begin{pmatrix}a&b\\c&d\end{pmatrix}$ for $C_2$ and $C_3$.  Then $\widetilde XC(1, 0, 0, 1; 0, 0)=C(0, 1, 1, 0; 0, 0)X$, which is rewritten as $[ab=cd=0, ad=c, bc=d, bc=a, ad=b]$.  If $a=1$, then $b=d=0$, hence $|X|=0$, a contradiction.  If $a=0$, then $c=0$, hence $|X|=0$, a contradiction.

By (i) and (ii), we obtain the desired result.
\end{proof}

\begin{lem}\label{lem:ECC10}
  No two algebras in $\mathcal{ECC}_{10}$ are isomorphic to each other.
\end{lem}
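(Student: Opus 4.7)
The plan is to verify pairwise non-isomorphism within $\mathcal{ECC}_{10} = \{C_4, C_5, C_6, C_7\}$ by extracting isomorphism invariants from the multiplication tables. Unlike part (i) of Lemma~\ref{lem:ECC00}, Corollary~\ref{cor:rank} is of no help here: all four structure matrices have rank $2$, so an additional invariant (or a direct matrix analysis in the style of part (ii) of Lemma~\ref{lem:ECC00}) is needed.

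My first step is to introduce the left and right annihilators
\[
L(A) = \{x \in A : xA = 0\}, \qquad R(A) = \{x \in A : Ax = 0\},
\]
whose $\mathbb F_2$-dimensions are isomorphism invariants. A short direct computation from $e^2 = e$, $f^2 = 0$, together with the specific forms of $ef, fe$ for each algebra, gives $L(C_4) = \mathbb F_2 f$, $R(C_4) = 0$, $L(C_7) = 0$, $R(C_7) = \mathbb F_2 f$, and $L(C_5) = R(C_5) = L(C_6) = R(C_6) = 0$. Thus $(\dim L, \dim R)$ takes the three distinct values $(1, 0)$, $(0, 1)$, and $(0, 0)$ on the four algebras, handling five of the six pairs at once.

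The only remaining pair is $(C_5, C_6)$, which I would separate via the isomorphism-invariant property ``$A$ possesses a right multiplicative identity.'' In $C_5$ one has $e \cdot e = e$ and $f \cdot e = f$, so $e$ itself is a right identity. In $C_6$, the right-multiplication operators on the basis $\{e, f\}$ are $R_e = \bigl(\begin{smallmatrix}1 & 1\\ 0 & 1\end{smallmatrix}\bigr)$ and $R_f = \bigl(\begin{smallmatrix}0 & 0\\ 1 & 0\end{smallmatrix}\bigr)$, so any $\mathbb F_2$-linear combination $R_{x_1 e + x_2 f}$ equalling $I$ would force the inconsistent pair $x_1 = 1$, $x_1 = 0$. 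Hence $C_6$ has no right identity, and $C_5 \not\cong C_6$.

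The main obstacle is precisely this last pair $(C_5, C_6)$: the two algebras are opposite to each other and therefore agree on every symmetric invariant (annihilator dimensions, number of idempotents, structure-matrix rank, and so on); only an asymmetric invariant such as one-sided identity will tell them apart. If one preferred to avoid invariants and mimic part (ii) of Lemma~\ref{lem:ECC00} uniformly for all six pairs, the equation $\widetilde X C_j = C_i X$ with $X = \bigl(\begin{smallmatrix}a&b\\c&d\end{smallmatrix}\bigr)$ can be expanded entrywise, and in each case a short case split on $a$ forces $ad + bc = 0$, contradicting $X \in GL_2(\mathbb F_2)$; but the invariant approach above keeps the bookkeeping much lighter.
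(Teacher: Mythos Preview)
Your argument is correct, but it proceeds quite differently from the paper's. The paper exploits associativity (Lemma~\ref{lem:assoc-curled}) as its main invariant: since $C_4$ and $C_7$ are associative while $C_5$ and $C_6$ are not, four of the six pairs are separated immediately; the pair $(C_4,C_7)$ is handled by a direct matrix computation $\widetilde X C_7 = C_4 X$; and the pair $(C_5,C_6)$ is reduced, via the isomorphisms $C_5\cong C_3$ and $C_6\cong C_2$ of Lemma~\ref{lem:iso-ec-curled1}(ii), to the already-established $C_2\ncong C_3$ of Lemma~\ref{lem:ECC00}. Your route instead uses the pair $(\dim L(A),\dim R(A))$ of one-sided annihilator dimensions to separate five pairs at once, and then the existence of a one-sided identity for the remaining pair $(C_5,C_6)$. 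Your approach is more self-contained (it needs neither the associativity classification nor the earlier lemmas) and avoids any explicit $\widetilde X$ computation; the paper's approach, on the other hand, reuses machinery already in place and makes the logical dependence on Sections~\ref{sec:3special-curled} and~\ref{sec:class-ec-curled} explicit. Your observation that $C_5$ and $C_6$ are opposite algebras nicely explains why only an asymmetric invariant can separate them, a point the paper leaves implicit.
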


\begin{proof}

(i) $C_4\ncong C_5$ and $C_4\ncong C_6$.  By Lemma~\ref{lem:assoc-curled}, we see that $C_4$ is associative, but $C_5$ and $C_6$ are not.  

(ii) $C_4\ncong C_7$.  Suppose there is a transformation matrix $X=\begin{pmatrix}a&b\\c&d\end{pmatrix}$ for $C_4$ and $C_7$.  Then $\widetilde XC_7=C_4X$, which is rewritten as $[ab=b, c=cd=ac=ad=0, ac=c, bc=d]$, which implies $c=d=0$.  Then $|X|=0$, a contradiction.

(iii) $C_5\ncong C_6$.  This directly follows from Lemma~\ref{lem:iso-ec-curled1} (ii) and Lemma~\ref{lem:ECC00}.

(iv) $C_5\ncong C_7$ and $C_6\ncong C_7$.  By Lemma~\ref{lem:assoc-curled}, we see that $C_7$ is associative, but $C_5$ and $C_6$ are not.

Then we obtain the desired result from (i)$\sim$(iv).
\end{proof}

\begin{lem}\label{lem:c12-15}
No two algebras of $C_{12}, C_{13}, C_{14}$ and $C_{15}$ are isomorphic to each other.
\end{lem}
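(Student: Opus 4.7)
The plan is to separate the four algebras into equivalence classes using the isomorphism invariants ``commutativity'' and ``associativity'' (both of which are obviously preserved under algebra isomorphism), and then handle the one remaining pair by a direct transformation-matrix argument.

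First I would read off from Lemmas~\ref{lem:comm-curled} and~\ref{lem:assoc-curled} the following profile for each algebra: $C_{12}$ is both commutative and associative; $C_{13}$ is commutative but not associative; $C_{14}$ and $C_{15}$ are both associative but not commutative. This already separates $C_{12}$ from $C_{13}$ (associativity fails on $C_{13}$), and both of $C_{12}, C_{13}$ from both of $C_{14}, C_{15}$ (commutativity fails on $C_{14}, C_{15}$). So five of the six pairs are settled by invariants alone.

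The only remaining pair is $C_{14}$ versus $C_{15}$, and this is where the real work lies. Following the style of Lemma~\ref{lem:ECC00}(ii) and Lemma~\ref{lem:ECC10}, I would suppose for contradiction that there exists a transformation matrix $X=\begin{pmatrix}a&b\\c&d\end{pmatrix}\in GL_2(\mathbb F_2)$ with $\widetilde{X}C_{15}=C_{14}X$. Writing out $\widetilde X$ explicitly and multiplying, the first two rows will force $ab=0$ and $cd=0$, while the last two rows will yield linear relations of the form $ac+ad=c$, $bc+bd=d$, $ac+bc=a$, $ad+bd=b$. Combined with the nonsingularity condition $ad+bc=1$, a short case analysis on whether $a=0$ or $a=1$ collapses $X$ into a singular matrix in each branch, which is the desired contradiction.

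The main obstacle is simply making sure the arithmetic in the $\widetilde X C_{15}=C_{14}X$ equation is organized cleanly; the invariant argument handles the other five pairs at once, but for $C_{14}\ncong C_{15}$ there is no cheap shortcut via rank, commutativity, or associativity, so the direct computation must be carried out. Once the four row-pairs of equations are written down, the contradiction drops out quickly from $\det X=1$.
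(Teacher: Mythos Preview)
Your proposal is correct. The invariant profiles you read off from Lemmas~\ref{lem:comm-curled} and~\ref{lem:assoc-curled} are accurate, and your direct calculation for $C_{14}\ncong C_{15}$ goes through: the first two row-pairs do give $ab=cd=0$, the last two give exactly the four relations you list, and the case split on $a$ forces $\det X=0$ in both branches.

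The paper's proof differs in two places. First, to isolate $C_{12}$ it invokes \emph{unitality} (Lemma~\ref{lem:unital-curled}) rather than associativity: $C_{12}$ is unital while $C_{13},C_{14},C_{15}$ are not, which kills three pairs at once. Your associativity/commutativity pair achieves the same separation, so this is purely a matter of which invariant one reaches for. Second, for $C_{14}\ncong C_{15}$ the paper avoids any fresh computation by appealing to Lemma~\ref{lem:iso-ec-curled1}(iv), which gives $C_{14}\cong C_4$ and $C_{15}\cong C_7$, and then to Lemma~\ref{lem:ECC10}, which already established $C_4\ncong C_7$. That reduction is slicker in context since the $C_4\ncong C_7$ calculation has already been done; your direct $\widetilde XC_{15}=C_{14}X$ argument is self-contained but duplicates work that the paper has banked earlier.
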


\begin{proof}
(i) $C_{12}\ncong C_{13}, C_{14}, C_{15}$.  By Lemma~\ref{lem:unital-curled}, we see that $C_{12}$ is unital, but $C_{13}, C_{14}$ and $C_{15}$ are not.  

(ii) $C_{13}\ncong C_{14}, C_{15}$.  By Lemma~\ref{lem:comm-curled}, we see that $C_{13}$ is commutative, but $C_{14}$ and  $C_{15}$ are not.

(iii) $C_{14}\ncong C_{15}$.  This directly follows from Lemma~\ref{lem:iso-ec-curled1} (iv) and Lemma~\ref{lem:ECC10}.

 Then we obtain the desired result from (i)$\sim$(iii).
\end{proof}

\begin{lem}\label{lem:noniso-ec-curled}
  {\rm{(i)}} $C_4\ncong C_1, C_2, C_3$ and $C_7\ncong C_1, C_2, C_3$.

{\rm{(ii)}} $C_{12}\ncong C_1, C_2, C_3$ and $C_{13}\ncong C_1, C_2, C_3$.

{\rm{(iii)}} $C_{12}\ncong C_4, C_7$ and $C_{13}\ncong C_{4}, C_7$.
\end{lem}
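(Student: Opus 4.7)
The plan is to distinguish each pair of algebras by invariants preserved under isomorphism: the three special properties unitality, commutativity, and associativity, whose curled representatives have already been enumerated in Lemmas~\ref{lem:unital-curled}, \ref{lem:comm-curled}, and~\ref{lem:assoc-curled}, together with the rank of the structure matrix, which is an isomorphism invariant by Corollary~\ref{cor:rank}. Since the algebras $C_i$ appearing in this lemma are all concretely specified, reading off which of these four invariants each satisfies is just a table lookup, and every non-isomorphism I need will follow from a mismatch in one of them.

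For part~(i), I would observe that $C_4$ and $C_7$ both appear in the associative list of Lemma~\ref{lem:assoc-curled}, whereas none of $C_1, C_2, C_3$ does. Since associativity is preserved under isomorphism, this simultaneously rules out $C_4\cong C_j$ and $C_7\cong C_j$ for $j=1,2,3$ in one stroke.

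For part~(ii), I would split into two subcases. The algebra $C_{12}$ is unital by Lemma~\ref{lem:unital-curled}, while $C_1, C_2, C_3$ are not in $\{C_{12}, C_{12'}, C_{12''}\}$ and hence are not unital; this handles $C_{12}\ncong C_1, C_2, C_3$. For $C_{13}$, Lemma~\ref{lem:comm-curled} shows that $C_{13}$ is commutative but that $C_2$ and $C_3$ are not in the commutative list, so $C_{13}\ncong C_2, C_3$. The remaining case $C_{13}\ncong C_1$ is the main obstacle of the lemma, since both algebras are commutative, non-unital, and non-associative, so none of the three distinguished properties separates them. I plan to resolve it by comparing ranks of structure matrices: the structure matrix of $C_1 = C(0,1,0,1;0,0)$ has only two nonzero rows, both equal to $(0,1)$, so it has rank~$1$, whereas the structure matrix of $C_{13} = C(1,1,1,1;1,1)$ has rank~$2$ (its first two rows are already $(1,0)$ and $(0,1)$). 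Corollary~\ref{cor:rank} then rules out $C_1\cong C_{13}$.

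For part~(iii), the same special invariants suffice with no rank computation needed: $C_{12}$ is unital while $C_4$ and $C_7$ are not (neither lies in $\{C_{12}, C_{12'}, C_{12''}\}$), and $C_{13}$ is commutative while $C_4$ and $C_7$ are not (neither lies in the commutative list). Both non-isomorphisms follow immediately from Lemmas~\ref{lem:unital-curled} and~\ref{lem:comm-curled}, respectively.
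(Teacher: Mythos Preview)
Your proof is correct and essentially identical to the paper's: parts~(i) and~(ii) match exactly (associativity for~(i); unitality, commutativity, and the rank comparison for~(ii)). The only cosmetic difference is in part~(iii), where you invoke unitality to separate $C_{12}$ from $C_4,C_7$, whereas the paper uses commutativity for both $C_{12}$ and $C_{13}$ at once; either invariant works.
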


\begin{proof}
(i) By Lemma~\ref{lem:assoc-curled}, we see that $C_{4}$ and $C_7$ are associative, but $C_1, C_2, $ and $C_3$ are not.

(ii)  By Lemma~\ref{lem:unital-curled}, we see that $C_{12}$ is unital, but $C_1, C_2, $ and $C_3$ are not.  This implies  $C_{12}\ncong C_1, C_2, C_3$.  Since ${\rm{rank}}\, C_{13}=2$ and  ${\rm{rank}}\, C_{1}=1$, it follows that $C_{13}\ncong C_1$.  Moreover, by Lemma~\ref{lem:comm-curled}, we see that $C_{13}$ is commutative, but $C_2$ and $C_3$ are not. 

(iii) By Lemma~\ref{lem:comm-curled}, we see that $C_{12}$ and $C_{13}$ are commutative, but $C_4$ and $C_7$ are not.
\end{proof}

By Proposition~\ref{pro:ec-curled} and Lemmas~\ref{lem:iso-ec-curled1} to \ref{lem:noniso-ec-curled}, two-dimensional endo-commutative curled algebras over $\mathbb F_2$ are 
$\{C_0, C_1, C_2, C_3, C_4, C_7, C_{12},C_{13}\}$ up to isomorphism.
Here we put 
\[
ECC^2_0=C_0, ECC^2_1=C_1, ECC^2_2=C_2, ECC^2_3=C_3, 
\]
\[
ECC^2_4=C_4, ECC^2_5=C_7, ECC^2_{6}=C_{12}\, \, {\rm{and}}\, \, ECC^2_{7}=C_{13}.
\]

Then we have:

\begin{thm}\label{thm:ec-curled}
Up to isomorphism, two-dimensional endo-commutative curled algebras over $\mathbb F_2$ are exactly classified into the eight algebras
\[
ECC^2_0, ECC^2_1, ECC^2_2, ECC^2_3, ECC^2_4, ECC^2_5, ECC^2_{6}\, \, {\rm{and}}\, \, ECC^2_{7}
\]
with multiplication tables on a linear base $\{e, f\}$ defined by

\[
\begin{pmatrix}0&0\\0&0\end{pmatrix}, \begin{pmatrix}0&f\\f&0\end{pmatrix}, \begin{pmatrix}0&f\\e&0\end{pmatrix}, \begin{pmatrix}0&e\\f&0\end{pmatrix}, \begin{pmatrix}e&f\\0&0\end{pmatrix}, \begin{pmatrix}e&0\\f&0\end{pmatrix}, \begin{pmatrix}e&0\\0&f\end{pmatrix}\, \, {\rm{and}}\, \,\begin{pmatrix}e&e+f\\e+f&f\end{pmatrix},
\]
respectively. 
\end{thm}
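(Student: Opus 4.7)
The approach is to assemble the classification from the pieces already established. By Proposition~\ref{pro:ec-curled}, every two-dimensional endo-commutative curled algebra over $\mathbb{F}_2$ is isomorphic to one of the twenty algebras in $\mathcal{ECC}_{00}\cup\mathcal{ECC}_{10}\cup\mathcal{ECC}_{01}\cup\mathcal{ECC}_{11}$, so the theorem is really a statement about which of these twenty are pairwise non-isomorphic. The proof plan is therefore a two-step reduction: first collapse the list using known isomorphisms, then verify that what remains is pairwise non-isomorphic.

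For the first (collapse) step, I would invoke Lemma~\ref{lem:iso-ec-curled1} directly. Part (i) identifies $C_1$, $C_{1'}$, $C_{1''}$; part (ii) identifies $C_5\cong C_3$ and $C_6\cong C_2$; part (iii) collapses all of $\mathcal{ECC}_{01}$ into representatives already present in $\mathcal{ECC}_{10}$; and part (iv) collapses $C_{12'}, C_{12''}$ onto $C_{12}$ and $C_{14}, C_{15}$ onto $C_4, C_7$ respectively. After applying these, only the eight representatives $C_0, C_1, C_2, C_3, C_4, C_7, C_{12}, C_{13}$ remain.

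For the second (separation) step, I would combine Lemmas~\ref{lem:ECC00}, \ref{lem:ECC10}, \ref{lem:c12-15}, and \ref{lem:noniso-ec-curled}. Lemma~\ref{lem:ECC00} handles non-isomorphisms within $\{C_1,C_2,C_3\}$; Lemma~\ref{lem:ECC10} handles $\{C_4, C_7\}$ internally and relative to $C_5, C_6$; Lemma~\ref{lem:c12-15} handles $\{C_{12}, C_{13}\}$; and Lemma~\ref{lem:noniso-ec-curled} handles the cross-family comparisons between $\{C_4, C_7\}$, $\{C_{12}, C_{13}\}$, and $\{C_1, C_2, C_3\}$. The algebra $C_0$ is separated from all others because it is the zero algebra (every product is zero), a property preserved by isomorphism. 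Together these cover all $\binom{8}{2}=28$ pairs.

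Finally, renaming $ECC^2_0=C_0$, $ECC^2_1=C_1$, and so on as indicated in the statement, one reads off the multiplication tables directly from the definitions of $C(a,b,c,d;\varepsilon,\delta)$: the structure matrix $\begin{pmatrix}\varepsilon&0\\0&\delta\\a&b\\c&d\end{pmatrix}$ encodes $e^2=\varepsilon e$, $f^2=\delta f$, $ef=ae+bf$, $fe=ce+df$, which yields the tables displayed in the theorem. There is no real obstacle here; the whole argument is essentially a tabulation and verification, and the only risk is combinatorial, namely missing a pair in the separation step, so I would organize the pair-check by listing all twenty-eight pairs explicitly against the four separation lemmas to be sure none is overlooked.
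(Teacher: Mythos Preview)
Your proposal is correct and follows essentially the same approach as the paper: the paper likewise deduces the theorem directly from Proposition~\ref{pro:ec-curled} together with Lemmas~\ref{lem:iso-ec-curled1} through~\ref{lem:noniso-ec-curled}, after noting separately that $C_0$, being the zero algebra, is isomorphic to none of the others. Your organization of the collapse and separation steps, and the final translation to multiplication tables, mirrors the paper exactly.
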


The following proposition describes the details of Corollary~\ref{cor:3special-curled}, except for (i).

\begin{pro}\label{pro:detail-cor2}
Let $A$ be a curled algebra of dimension 2 over $\mathbb F_2$.  Then

${\rm{(i)}}$ $A$ is zeropotent iff it is isomorphic to either one of $ECC_0^2$ and $ECC_1^2$.

${\rm{(ii)}}$ $A$ is unital iff it is isomorphic to $ECC_6^2$.

${\rm{(iii)}}$ $A$ is commutative iff it is isomorphic to either one of $ECC_0^2, ECC_1^2, ECC_6^2$ and $ECC_7^2$.

${\rm{(iv)}}$  $A$ is associative iff it is isomorphic to either one of $ECC_0^2, ECC_4^2, ECC_5^2$ and $ECC_6^2$. 
\end{pro}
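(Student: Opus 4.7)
The plan is to handle part (i) by a direct case check on the eight representatives from Theorem~\ref{thm:ec-curled}, and to derive parts (ii)--(iv) as pure bookkeeping on top of Proposition~\ref{pro:3special-curled} via the isomorphisms collected in Lemma~\ref{lem:iso-ec-curled1}.

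For part (i), I first observe that zeropotency implies endo-commutativity, since under $x^2=0$ both sides of $x^2y^2=(xy)^2$ are zero. Hence any zeropotent curled algebra of dimension $2$ over $\mathbb{F}_2$ is isomorphic to one of $ECC_0^2,\dots,ECC_7^2$ by Theorem~\ref{thm:ec-curled}, and it remains to test each of the eight multiplication tables. Writing $x=x_1e+x_2f$, we have $x^2=x_1e^2+x_1x_2(ef+fe)+x_2f^2$ (using $x_i^2=x_i$ in $\mathbb F_2$). The table for $ECC_0^2$ is zero, so it is trivially zeropotent. For $ECC_1^2$ we have $e^2=f^2=0$ and $ef+fe=f+f=0$ in characteristic $2$, so every $x^2$ vanishes. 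For $ECC_2^2$ and $ECC_3^2$ we have $e^2=f^2=0$ but $ef+fe=e+f$, so $(e+f)^2=e+f\neq 0$. For $ECC_4^2,\,ECC_5^2,\,ECC_6^2,\,ECC_7^2$ we have $e^2=e\neq 0$. This identifies exactly $ECC_0^2$ and $ECC_1^2$ as the zeropotent ones.

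For parts (ii)--(iv), Proposition~\ref{pro:3special-curled} already gives the lists in terms of the labels $C_i$; I only need to collapse each list using Lemma~\ref{lem:iso-ec-curled1} and rename via the dictionary $C_0=ECC_0^2,\;C_1=ECC_1^2,\;C_4=ECC_4^2,\;C_7=ECC_5^2,\;C_{12}=ECC_6^2,\;C_{13}=ECC_7^2$ fixed just before Theorem~\ref{thm:ec-curled}. In (ii) the candidates $\{C_{12},C_{12'},C_{12''}\}$ all coincide with $C_{12}=ECC_6^2$ by Lemma~\ref{lem:iso-ec-curled1}(iv). In (iii) the eight candidates $\{C_0,C_1,C_{1'},C_{1''},C_{12},C_{12'},C_{12''},C_{13}\}$ collapse under parts (i) and (iv) of Lemma~\ref{lem:iso-ec-curled1} to $\{C_0,C_1,C_{12},C_{13}\}=\{ECC_0^2,ECC_1^2,ECC_6^2,ECC_7^2\}$. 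In (iv) the ten candidates $\{C_0,C_4,C_7,C_{10},C_{11},C_{12},C_{12'},C_{12''},C_{14},C_{15}\}$ collapse under parts (iii) and (iv) of Lemma~\ref{lem:iso-ec-curled1} (which give $C_{10}\cong C_4$, $C_{11}\cong C_7$, $C_{14}\cong C_4$, $C_{15}\cong C_7$ and $C_{12}\cong C_{12'}\cong C_{12''}$) to $\{C_0,C_4,C_7,C_{12}\}=\{ECC_0^2,ECC_4^2,ECC_5^2,ECC_6^2\}$.

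The only real work is part (i); the delicate point there is the characteristic $2$ cancellation $ef+fe=2f=0$ which makes $ECC_1^2$ zeropotent even though its multiplication is not identically zero, while the superficially similar $ECC_2^2$ and $ECC_3^2$ fail the test on $x=e+f$. Everything else reduces to quoting Proposition~\ref{pro:3special-curled} and matching names, so no further computation is needed.
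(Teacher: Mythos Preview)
Your proof is correct and follows essentially the same route as the paper, which disposes of the proposition in one line: ``This follows from Proposition~\ref{pro:3special-curled} and Theorem~\ref{thm:ec-curled}.'' Your parts (ii)--(iv) simply unpack that sentence via the isomorphisms of Lemma~\ref{lem:iso-ec-curled1}, which is exactly the intended reading.

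For part (i) you are actually more careful than the paper: Proposition~\ref{pro:3special-curled} says nothing about zeropotency, so the paper's one-line proof does not literally cover that case. Your observation that zeropotent $\Rightarrow$ endo-commutative (hence Theorem~\ref{thm:ec-curled} applies) followed by the direct check on the eight representatives is the natural way to fill that gap, and your computations are correct, including the characteristic-$2$ point that $ef+fe=0$ in $ECC_1^2$ while $ef+fe=e+f\neq 0$ in $ECC_2^2$ and $ECC_3^2$.
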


\begin{proof}
This follows from Proposition~\ref{pro:3special-curled} and Theorem~\ref{thm:ec-curled}.
\end{proof}

\section{Endo-commutative straight algebras of dimension 2}\label{sec:ec-straight}
Let $A$ be a 2-dimensional straight algebra over $\mathbb F_2$ with a linear base $\{e, f\}$.  By replacing the bases, we may assume that $e^2=f$.  Write $f^2=pe+qf, ef=ae+bf$ and $fe=ce+df$, hence the structure matrix of $A$ is 
\[
S(p, q, a, b, c, d)\equiv\begin{pmatrix}0&1\\p&q\\a&b\\c&d\end{pmatrix}, 
\]
where $a, b, c, d, p, q\in\mathbb F_2$.  Of course, the algebra $S(p, q, a, b, c, d)$ is always straight.  Let $a_1=0, b_1=1, a_2=p, b_2=q, a_3=a, b_3=b, a_4=c$ and $b_4=d$.  In this case,
(\ref{eq:ec-struc}) can be rewritten as
\begin{equation}\label{eq:ec-straight}
\left\{\begin{array}{@{\,}lll}   
pq+pc+pb+ab+abc=0\cdots{\rm{(i)}}\\
pq+pa+pd+acd+cd=0\cdots{\rm{(ii)}}\\
c+pb+pd+a=0\cdots{\rm{(iii)}}\\
pcd+pqd+pqb+pab=0\cdots{\rm{(iv)}}\\
q+pd+a+qb+ab+abd=0\cdots{\rm{(v)}}\\
q+pb+c+qd+bcd+cd=0\cdots{\rm{(vi)}}\\
cd+qb+qd+ab=0\cdots{\rm{(vii)}}\\
pa+pc+qbc+qb+qd+pd+pb+qad=0.\cdots{\rm{(viii)}}\\
\end{array} \right. 
\end{equation}
Therefore, by Proposition~\ref{pro:struc-matrix}, we have the following:

\begin{lem}\label{lem:ec-straight}
Suppose $p, q, a, b, c, d\in\mathbb F_2$.  Then the algebra $S(p, q, a, b, c, d)$ is endo-commutative iff the scalars $p, q, a, b, c, d$ satisfy (\ref{eq:ec-straight}). 
\end{lem}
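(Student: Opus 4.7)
The plan is to obtain Lemma~\ref{lem:ec-straight} as a direct specialization of Proposition~\ref{pro:struc-matrix}. By construction, the structure matrix of $S(p,q,a,b,c,d)$ is $\begin{pmatrix}0&1\\p&q\\a&b\\c&d\end{pmatrix}$, so I substitute $a_1=0$, $b_1=1$, $a_2=p$, $b_2=q$, $a_3=a$, $b_3=b$, $a_4=c$, $b_4=d$ into the eight equations (\ref{eq:ec-struc}) furnished by Proposition~\ref{pro:struc-matrix}. Endo-commutativity of $S(p,q,a,b,c,d)$ is then logically equivalent to the conjunction of the eight resulting equations.

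The substantial content of the proof is the routine verification that, under this substitution, the eight equations of (\ref{eq:ec-struc}) collapse to exactly the eight equations of (\ref{eq:ec-straight}) in the listed order. Every monomial of (\ref{eq:ec-struc}) containing $a_1$ as a factor vanishes (since $a_1=0$), while every occurrence of $b_1=1$ is absorbed into its neighbouring factor. For instance, the first equation of (\ref{eq:ec-struc}) reduces to $pq+pc+pb+ab+abc=0$, which is line (i) of (\ref{eq:ec-straight}); the analogous reduction of the remaining seven equations yields (ii)--(viii). No further relations among $p,q,a,b,c,d$ are invoked, so this is a purely mechanical step.

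The main obstacle, such as it is, is purely bookkeeping: one must track the dozens of monomials across the eight equations without dropping or mismatching any. Since all arithmetic is in $\mathbb F_2$, no sign issues arise, and the idempotent relation $1+1=0$ silently cancels any repeated terms that would otherwise appear. Once the substitution has been carried out term by term, each of the eight lines of (\ref{eq:ec-straight}) emerges directly from the corresponding line of (\ref{eq:ec-struc}), and Proposition~\ref{pro:struc-matrix} closes the argument.
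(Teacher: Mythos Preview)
Your proposal is correct and follows exactly the paper's approach: the paper likewise substitutes $a_1=0$, $b_1=1$, $a_2=p$, $b_2=q$, $a_3=a$, $b_3=b$, $a_4=c$, $b_4=d$ into the eight equations of (\ref{eq:ec-struc}) and invokes Proposition~\ref{pro:struc-matrix} to conclude. Your remark that each line of (\ref{eq:ec-straight}) arises from the corresponding line of (\ref{eq:ec-struc}) is precisely what the paper asserts when it says ``(\ref{eq:ec-struc}) can be rewritten as'' (\ref{eq:ec-straight}).
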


(A) $a=c, b+d=1$: In this case, (iii) implies $p=0$.  Also (vii) implies $a=q$.  Therefore we see easily that (\ref{eq:ec-straight}) is rewritten as $\left\{\begin{array}{@{\,}lll}   
p=0\\
a=q.
\end{array} \right. $
Then the solutions $(p, q, a, b, c, d)$ of (\ref{eq:ec-straight}) are $(0, 0, 0, 0, 0, 1), (0, 0, 0, 1, 0, 0), (0, 1, 1, 1, 1, 0), (0, 1, 1, 0, 1, 1)$.

(B) $a=c, b+d=0$: We see easily that (\ref{eq:ec-straight}) is rewritten as $\left\{\begin{array}{@{\,}lll}   
pq+pa+pb=0\\
q+pb+a+qb=0.
\end{array} \right. $ If $p=1$, then (\ref{eq:ec-straight})$\Leftrightarrow\left\{\begin{array}{@{\,}lll}   
q+a+b=0\\
qb=0.
\end{array} \right.$  Also if $p=0$, then  (\ref{eq:ec-straight})$\Leftrightarrow q+a+qb=0$.  Then the solutions $(p, q, a, b, c, d)$ of (\ref{eq:ec-straight}) are $(1, 1, 1, 0, 1, 0), (1, 0, 0, 0, 0, 0), (1, 0, 1, 1, 1, 1),$\\ $(0, 0, 0, 0, 0, 0), (0, 0, 0, 1, 0, 1), (0, 1, 0, 1, 0, 1), (0, 1, 1, 0, 1, 0)$.

(C) $a+c=1$: In this case, (iii) implies $p(b+d)=1$, that is,  $p=b+d=1$.  Therefore, we see easily that (\ref{eq:ec-straight}) is rewritten as $\left\{\begin{array}{@{\,}lll}   
p=b+d=a+b=1\\
q=0.
\end{array} \right. $  Then the solutions $(p, q, a, b, c, d)$ of (\ref{eq:ec-straight}) are $(1, 0, 0, 1, 1, 0), (1, 0, 1, 0, 0, 1)$.  

Put
\begin{align*}
&S_1=S(0, 0, 0, 0, 0, 1), S_2=S(0, 0, 0, 1, 0, 0), S_3= S(0, 1, 1, 1, 1, 0),\\
& S_4=S(0, 1, 1, 0, 1, 1), S_5=S(1, 1, 1, 0, 1, 0), S_6=S(1, 0, 0, 0, 0, 0), \\
&S_7=S(1, 0, 1, 1, 1, 1),S_8=S(0, 0, 0, 0, 0, 0),S_9=S(0, 0, 0, 1, 0, 1), \\
&S_{10}=S(0, 1, 0, 1, 0, 1), S_{11}=S(0, 1, 1, 0, 1, 0),S_{12}=S(1, 0, 0, 1, 1, 0),\\
&S_{13}=S(1, 0, 1, 0, 0, 1).
\end{align*}
By (A), (B), (C) and Lemma~\ref{lem:ec-straight}, we have the following:

\begin{pro}\label{prop:ec-straight}
The algebra $S(p, q, a, b, c, d)$ is endo-commutative iff it is equal to either one of $S_1, S_2, S_3, S_4, S_5, S_6, S_7, S_8, S_9, S_{10}, S_{11}, S_{12}$ and $S_{13}$.
\end{pro}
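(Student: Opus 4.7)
My plan is to reduce Proposition~\ref{prop:ec-straight} to a finite case analysis using the characterization already established. By Lemma~\ref{lem:ec-straight}, the algebra $S(p,q,a,b,c,d)$ is endo-commutative if and only if the six scalars satisfy the system (\ref{eq:ec-straight}). So it suffices to enumerate all tuples $(p,q,a,b,c,d)\in \mathbb F_2^6$ satisfying (\ref{eq:ec-straight}) and verify that they are exactly the ones defining $S_1,\dots,S_{13}$.

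Rather than brute-forcing the $64$ possibilities, I would exploit equation (iii), which reads $c+pb+pd+a=0$, i.e., $a+c = p(b+d)$. This single relation partitions $\mathbb F_2^6$ naturally into three cases: (A) $a+c=0$ with $b+d=1$; (B) $a+c=0$ with $b+d=0$; and (C) $a+c=1$, which automatically forces $p=1$ and $b+d=1$. These are precisely the three cases laid out in the excerpt, so the case division is exhaustive.

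Within each case I would substitute the defining constraints into the remaining seven equations of (\ref{eq:ec-straight}) and simplify. In case (A), setting $a=c$ and $b+d=1$ causes most equations to become trivial, and (vii) plus (i) (say) collapse the system to $p=0$ and $a=q$, yielding four tuples. In case (B), setting $a=c$ and $b=d$ reduces the system to $pq+pa+pb=0$ together with $q+pb+a+qb=0$, which I would split on the value of $p\in\{0,1\}$ to recover seven tuples. In case (C), the forced values $p=1$ and $b+d=1$ further reduce (\ref{eq:ec-straight}) to $a+b=1$ and $q=0$, giving two tuples. Adding $4+7+2=13$ produces precisely the list $S_1,\dots,S_{13}$.

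The work is conceptually routine, so the main obstacle is arithmetic bookkeeping: one must faithfully simplify each of the eight polynomial identities in (\ref{eq:ec-straight}) under each case hypothesis without overlooking a constraint or introducing spurious solutions. A natural sanity check is to substitute each of the thirteen candidate tuples back into all eight equations directly; since the reverse direction (every $S_i$ is endo-commutative) is implied by this check, one simultaneously verifies both implications of the ``iff''.
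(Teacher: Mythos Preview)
Your proposal is correct and follows essentially the same approach as the paper: the paper also invokes Lemma~\ref{lem:ec-straight}, uses equation~(iii) of~(\ref{eq:ec-straight}) to split into the three cases $a=c,\ b+d=1$; $a=c,\ b+d=0$; and $a+c=1$ (forcing $p=b+d=1$), and reduces each case to the same auxiliary systems you describe, obtaining $4+7+2=13$ tuples. The only cosmetic difference is that in case~(A) the paper reads off $p=0$ directly from~(iii) and $a=q$ from~(vii), rather than from ``(vii) plus (i)''.
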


\section{Straight algebras of dimension 2: unital, commutative and associative cases}\label{sec:3special-straight}
In this section, we determine unital, commutative, or associative straight algebras of dimension 2 over $\mathbb F_2$.

(I) Unital case

First of all, we determine unital straight algebras of dimension 2 over $\mathbb F_2$

A straight algebra $A=S(p, q, a, b, c, d)$ is unital iff
\[
\exists u\in A : ue=eu=e\, \,{\rm{and}}\, \, uf=fu=f.
\]
Put $u=\alpha e+\beta f$. Then, we have
\[
S(p, q, a, b, c, d)\,  {\rm{is\, \, unital}} \Leftrightarrow \exists \alpha, \beta\in\mathbb{F}_2 : 
\left\{\begin{array}{@{\,}lll} \beta=a=c=1\\d=b=\alpha=p\\d+q=1
\end{array} \right. \Leftrightarrow\left\{\begin{array}{@{\,}lll}a=c=1\\p=b=d\\q=1+p. \end{array} \right.
\]
The solutions of the last equations are $(p, q, a, b, c, d)=(0, 1, 1, 0, 1, 0), (1, 0, 1, 1, 1, 1)$.
Then we have

\begin{lem}\label{lem:unital-straight}
The algebra $S(p, q, a, b, c, d)$ is unital iff it is equal to either one of $S_{7}$ and $S_{11}$.  
\end{lem}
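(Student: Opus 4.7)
The plan is to write out the unit-element condition directly, using the base multiplication rules $e^2=f$, $f^2=pe+qf$, $ef=ae+bf$, $fe=ce+df$ of $S(p,q,a,b,c,d)$, and then extract constraints on $\alpha,\beta,a,b,c,d,p,q\in\mathbb F_2$.

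First I would let $u=\alpha e+\beta f$ be a candidate unit and compute the four products $eu$, $ue$, $fu$, $uf$ in the basis $\{e,f\}$. For instance, $ue=\alpha e^2+\beta fe=\beta c\cdot e+(\alpha+\beta d)f$, and analogously for the other three. Setting $ue=eu=e$ forces the $e$-coefficients to equal $1$ and the $f$-coefficients to equal $0$, while $uf=fu=f$ forces the $e$-coefficients to equal $0$ and the $f$-coefficients to equal $1$. This yields eight scalar equations in $\mathbb F_2$.

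Next I would solve this system. The equations $\beta c=1$ and $\beta a=1$ immediately give $\beta=1$ and $a=c=1$; then the two equations of type $\alpha+\beta b=0$ and $\alpha+\beta d=0$ reduce to $b=d=\alpha$, while the equation $\alpha a+\beta p=0$ gives $\alpha=p$, so $p=b=d$. Finally $\alpha b+\beta q=1$ becomes $\alpha^2+q=1$, i.e.\ $q=1+\alpha=1+p$ (using $\alpha^2=\alpha$ in $\mathbb F_2$). Thus $A$ is unital precisely when $a=c=1$, $p=b=d$, and $q=1+p$.

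The two choices $p=0$ and $p=1$ give exactly the sextuples $(0,1,1,0,1,0)$ and $(1,0,1,1,1,1)$, matching $S_{11}$ and $S_7$ in the list preceding Proposition~\ref{prop:ec-straight}. Since the entire argument is linear-algebraic over $\mathbb F_2$, there is no real obstacle: the only mild care needed is to ensure that both one-sided unit conditions ($ue=e$ and $eu=e$, and similarly for $f$) are used, since $S(p,q,a,b,c,d)$ need not be commutative; without both sides one would underdetermine $a$ vs.\ $c$ and $b$ vs.\ $d$.
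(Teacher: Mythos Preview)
Your proposal is correct and follows essentially the same approach as the paper: both set $u=\alpha e+\beta f$, expand the four conditions $ue=eu=e$ and $uf=fu=f$ in the basis $\{e,f\}$, and reduce to $a=c=1$, $p=b=d$, $q=1+p$, yielding exactly $S_7$ and $S_{11}$. Your write-up simply makes the intermediate computations (e.g.\ $ue=\beta c\,e+(\alpha+\beta d)f$) more explicit than the paper's compressed display, but the argument is the same.
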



(II) Commutative case

Next, we determine commutative straight algebras of dimension 2 over $\mathbb F_2$.

Let $A=S(p, q, a, b, c, d)$.  Take $x, y\in A$ arbitrarily, and write $\left\{\begin{array}{@{\,}lll} x=x_1e+x_2f\\y=y_1e+y_2f,\end{array} \right.$  where $x_1, x_2, y_1, y_2\in\mathbb F_2$.  Then we see easily that 
\[
xy=yx\Leftrightarrow\left\{\begin{array}{@{\,}lll} x_1y_2a+x_2y_1c=y_1x_2a+y_2x_1c\\x_1y_2b+x_2y_1d=y_1x_2b+y_2x_1d.\end{array} \right.
\]
Then we see from Lemma~\ref{lem:lin-indep} that $A$ is commutative iff $a=c$ and $b=d$.
Hence, $A$ is commutative iff $A$ is equal to either one of the following 16 algebras:
\[
S(0, 0, 0, 0, 0, 0), S(0, 0, 0, 1, 0, 1), S(0, 0, 1, 0, 1, 0), S(0, 0, 1, 1, 1, 1),
\]
\[
S(0, 1, 0, 0, 0, 0), S(0, 1, 0, 1, 0, 1), S(0, 1, 1, 0, 1, 0), S(0, 1, 1, 1, 1, 1),
\]
\[
S(1, 0, 0, 0, 0, 0), S(1, 0, 0, 1, 0, 1), S(1, 0, 1, 0, 1, 0), S(1, 0, 1, 1, 1, 1)
\]
and
\[
S(1, 1, 0, 0, 0, 0), S(1, 1, 0, 1, 0, 1), S(1, 1, 1, 0, 1, 0), S(1, 1, 1, 1, 1, 1).
\]
Put
\begin{align*}
&S'_{1}=S(0, 0, 1, 0, 1, 0), S'_{2}=S(0, 0, 1, 1, 1, 1), S'_{3}=S(0, 1, 0, 0, 0, 0), S'_{4}=S(0, 1, 1, 1, 1, 1),\\
& S'_{5}=S(1, 0, 0, 1, 0, 1), S'_{6}=S(1, 0, 1, 0, 1, 0), S'_{7}=S(1, 1, 0, 0, 0, 0), S'_{8}=S(1, 1, 0, 1, 0, 1),\\
& S'_{9}=S(1, 1, 1, 1, 1, 1).
\end{align*}
Then we have:

\begin{lem}\label{lem:comm-straight}
The algebra $S(p, q, a, b, c, d)$ is commutative iff it is equal to either one of $S_i\, \, (5\le i\le11)$ and $S'_{i}\, \, (1\le i\le9)$.
\end{lem}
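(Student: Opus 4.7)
The plan is to leverage the characterization of commutativity that is essentially derived in the paragraph immediately preceding the lemma, and then merely bookkeep the resulting enumeration.

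First, I would make explicit the reduction to $a = c$ and $b = d$. Given arbitrary $x = x_1 e + x_2 f$ and $y = y_1 e + y_2 f$ in $A = S(p,q,a,b,c,d)$, the product $xy - yx$ is readily computed using the multiplication table: its $e$-component is $x_1 y_2\, a + x_2 y_1\, c - x_2 y_1\, a - x_1 y_2\, c = (x_1 y_2 + x_2 y_1)(a + c)$ in $\mathbb F_2$, and similarly the $f$-component is $(x_1 y_2 + x_2 y_1)(b + d)$. Note that the symmetric term $x_1 y_1 e^2$ and the $x_2 y_2 f^2$ terms cancel automatically so that $p$ and $q$ play no role in $xy - yx$. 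By Lemma~\ref{lem:lin-indep} (applied to the two monomials $x_1 y_2$ and $x_2 y_1$, which are linearly independent over $\mathbb F_2$), the vanishing of $xy - yx$ for all choices of $x_1, x_2, y_1, y_2$ is equivalent to $a + c = 0$ and $b + d = 0$, i.e., $a = c$ and $b = d$.

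Next, I would carry out the enumeration. The tuples $(p, q, a, b, c, d) \in \mathbb F_2^6$ satisfying $a = c$ and $b = d$ are precisely the $2^4 = 16$ tuples obtained by freely choosing $p, q \in \mathbb F_2$ and $a = c, b = d \in \mathbb F_2$, which the author has already written out as a complete list of sixteen $S(\cdot)$ expressions just before naming $S'_1, \ldots, S'_9$. Thus it only remains to match each of these sixteen tuples with either one of the already-defined $S_i$ (from Proposition~\ref{prop:ec-straight}) or one of the newly-defined $S'_j$. A direct inspection shows that seven of the sixteen coincide with $S_5, S_6, S_7, S_8, S_9, S_{10}, S_{11}$ (the commutative members of the endo-commutative list), while the remaining nine are precisely $S'_1, \ldots, S'_9$ by definition.

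The only obstacle is the routine bookkeeping of checking that the matching is exhaustive and without overlap, i.e., that $\{S_i : 5 \le i \le 11\}$ and $\{S'_j : 1 \le j \le 9\}$ are disjoint (they are, since no $S_i$ in the indicated range has the form $S(p,q,a,b,c,d)$ with $(a,b,c,d)$ coinciding with one of the $S'_j$ patterns) and that together they exhaust all sixteen commutative cases. This is a purely mechanical verification; no combinatorial or algebraic subtlety arises beyond what is already packaged in Lemma~\ref{lem:lin-indep}.
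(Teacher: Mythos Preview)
Your proposal is correct and follows essentially the same approach as the paper: the paper also reduces commutativity to the conditions $a=c$ and $b=d$ via the same computation (phrased as $xy=yx$ rather than $xy-yx=0$) and invokes Lemma~\ref{lem:lin-indep}, then enumerates the sixteen resulting tuples and sorts them into the $S_i$ and $S'_j$. Your additional remark about disjointness of the two lists is a minor bookkeeping point the paper leaves implicit, but otherwise the arguments coincide.
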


(III) Associative case

Finally, we determine associative straight algebras of dimension 2 over $\mathbb F_2$.

Let $A=S(p, q, a, b, c, d)$.  Take $x, y\in A$ arbitrarily, and write$\left\{\begin{array}{@{\,}lll} x=x_1e+x_2f\\y=y_1e+y_2f\\z=z_1e+z_2f,
\end{array} \right.X_1=x_1y_2a+x_2y_1c+x_2y_2p, X_2=x_1y_1+x_1y_2b+x_2y_1d+x_2y_2q, Y_1=y_1z_2a+y_2z_1c+y_2z_2p$ and $Y_2=y_1z_1+y_1z_2b+y_2z_1d+y_2z_2q$. Then we see easily that $A$ is associative iff
\[
\left\{\begin{array}{@{\,}lll} 
X_1z_2a+X_2z_1c+X_2z_2p=x_1Y_2a+x_2Y_1c+x_2Y_2p\cdots(\flat_1)\\
X_1z_1+X_1z_2b+X_2z_1d+X_2z_2q=x_1Y_1+x_1Y_2b+x_2Y_1d+x_2Y_2q\cdots(\flat_2)
\end{array} \right. 
\]
holds for all $x_i, y_i, z_i\in\mathbb F_2\, \, (1\le i\le2)$.  Put
\begin{align*}
 &Z_1=x_1y_1z_1, Z_2=x_1y_1z_2, Z_3=x_1y_2z_1,Z_4=x_1y_2z_2,\\
   &Z_5=x_2y_1z_1, Z_6=x_2y_1z_2, Z_7=x_2y_2z_1, Z_8=x_2y_2z_2.
\end{align*}
About $(\flat_1)$, note that
\begin{align*}
&X_1z_2a+X_2z_1c+X_2z_2p\\
&=Z_1c+Z_2p+Z_3bc+Z_4(a+bp)+Z_5dc+Z_6(ca+dp)+Z_7qc+Z_8(pa+qp)\\
\end{align*}
and
\begin{align*}
&x_1Y_2a+x_2Y_1c+x_2Y_2p\\
&=Z_1a+Z_2ba+Z_3da+Z_4qa+Z_5p+Z_6(ac+bp)+Z_7(c+dp)+Z_8(pc+qp).
\end{align*}

Therefore we see that ($\flat_1$) iff
\begin{align*}
&Z_1c+Z_2p+Z_3bc+Z_4(a+bp)+Z_5dc+Z_6(ca+dp)+Z_7qc+Z_8(pa+qp)\\
&=Z_1a+Z_2ba+Z_3da+Z_4qa+Z_5p+Z_6(ac+bp)+Z_7(c+dp)+Z_8(pc+qp).
\end{align*}
Then we see easily from Lemma~\ref{lem:lin-indep2} that $(\flat_1)$ holds for all $x_i, y_i, z_i\in\mathbb F_2\, \, (1\le i\le2)$ iff
\begin{equation}\label{eq:assoc-straight1}
\left\{\begin{array}{@{\,}lll}
    c=a, p=ba, bc=da, a+bp=qa, dc=p\\ ca+dp=ac+bp, qc=c+dp, pa+qp=pc+qp.
\end{array}\right. 
\end{equation}
About ($\flat_2$), note that  
\begin{align*}
&X_1z_1+X_1z_2b+X_2z_1d+X_2z_2q\\
&=Z_1d+Z_2q+Z_3(a+bd)+Z_4(ab+bq)+Z_5(c+d)+Z_6(cb+dq)+Z_7(p+qd)+Z_8(pb+q)\\
\end{align*}
and
\begin{align*}
&x_1Y_1+x_1Y_2b+x_2Y_1d+x_2Y_2q\\
&=Z_1b+Z_2(a+b)+Z_3(c+db)+Z_4(p+qb)+Z_5q+Z_6(ad+bq)+Z_7(cd+dq)+Z_8(pd+q).
\end{align*}
Therefore we see that ($\flat_2$) iff
\begin{align*}
&Z_1d+Z_2q+Z_3(a+bd)+Z_4(ab+bq)+Z_5(c+d)+Z_6(cb+dq)+Z_7(p+qd)+Z_8(pb+q)\\
&=Z_1b+Z_2(a+b)+Z_3(c+db)+Z_4(p+qb)+Z_5q+Z_6(ad+bq)+Z_7(cd+dq)+Z_8(pd+q).
\end{align*}
Then we see easily from Lemma~\ref{lem:lin-indep2} that  ($\flat_2$) holds for all $x_i, y_i, z_i\in\mathbb F_2\, \, (1\le i\le2)$ iff 
\begin{equation}\label{eq:assoc-straight2}
\left\{\begin{array}{@{\,}lll}
 d=b, q=a+b, a+bd=c+db, ab+bq=p+qb, c+d=q,\\
 cb+dq=ad+bq, p+qd=cd+dq, pb+q=pd+q.
\end{array}\right. 
\end{equation}
Hence, $A$ is associative iff both (\ref{eq:assoc-straight1}) and (\ref{eq:assoc-straight2}) hold.
It follows from this that:
\begin{equation}\label{eq:assoc-straight}
   c=a, d=b, p=ab\, \, {\rm{and}}\, \, q=a+b.
\end{equation}
By easy calculations, we see that the solutions $(p, q, a, b, c,d)$ of (\ref{eq:assoc-straight}) are $(0, 0, 0, 0, 0, 0)$, $(0, 1, 0, 1, 0, 1)$, $(0, 1, 1, 0, 1, 0)$, $(1, 0, 1, 1, 1,1)$.  

Put $S'_{10}=S(0, 1, 1, 0, 1, 0)$. Then we have the following:

\begin{lem}\label{lem:assoc-straight}
The algebra $S(p, q, a, b, c, d)$ is associative iff it is equal to either one of $S_7, S_8, S_{10}$ and $S'_{10}$.
\end{lem}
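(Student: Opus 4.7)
The plan is to use the groundwork already laid down in the excerpt: associativity of $S(p, q, a, b, c, d)$ is equivalent to the simultaneous validity of the finite coefficient systems (\ref{eq:assoc-straight1}) and (\ref{eq:assoc-straight2}), extracted from $(\flat_1)$ and $(\flat_2)$ via Lemma~\ref{lem:lin-indep2}. It has already been shown that these systems imply (\ref{eq:assoc-straight}), namely $c = a$, $d = b$, $p = ab$, $q = a + b$. To close the argument I need the converse: every tuple $(p,q,a,b,c,d)$ satisfying (\ref{eq:assoc-straight}) actually makes both (\ref{eq:assoc-straight1}) and (\ref{eq:assoc-straight2}) hold, so that each of the four candidate tuples below genuinely yields an associative algebra.

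The verification is by direct substitution. Putting $c = a$, $d = b$, $p = ab$, $q = a+b$ into each remaining equation of (\ref{eq:assoc-straight1}) and (\ref{eq:assoc-straight2}) reduces every identity to a tautology in $\mathbb{F}_2$, repeatedly using $t^2 = t$ and $t + t = 0$. For example, $ab + bq = p + qb$ becomes $ab + b(a+b) = ab + (a+b)b$, both sides equal to $b$; and $dc = p$ becomes $ba = ab$, which is trivial. The dozen or so remaining identities dispose of themselves in the same fashion. This step is where essentially all of the bookkeeping sits and is the main potential source of error, though each individual check is elementary.

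Once the equivalence with (\ref{eq:assoc-straight}) is in hand, the enumeration is immediate: the four scalars $p, q, c, d$ are uniquely determined by $(a, b) \in \mathbb{F}_2^2$, so there are exactly four associative straight algebras. The four assignments give: $(a,b) = (0,0)$ yields $S(0,0,0,0,0,0) = S_8$; $(a,b)=(0,1)$ yields $S(0,1,0,1,0,1) = S_{10}$; $(a,b)=(1,0)$ yields $S(0,1,1,0,1,0) = S'_{10}$; and $(a,b)=(1,1)$ yields $S(1,0,1,1,1,1) = S_7$. This recovers exactly the list in the statement, completing the proof.
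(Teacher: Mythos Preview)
Your proof is correct and follows essentially the same route as the paper: reduce associativity to the coefficient systems (\ref{eq:assoc-straight1}) and (\ref{eq:assoc-straight2}), extract the condition (\ref{eq:assoc-straight}), and enumerate the four solutions. In fact you are more careful than the paper, which records only the forward implication from (\ref{eq:assoc-straight1})--(\ref{eq:assoc-straight2}) to (\ref{eq:assoc-straight}) and then tacitly treats it as an equivalence; your explicit back-substitution closes that gap.
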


The following result immediately follows from Lemmas~\ref{lem:unital-straight}, \ref{lem:comm-straight} and \ref{lem:assoc-straight}.
\begin{pro}\label{pro:3special-straight}
Let $A$ be a straight algebra of dimension 2 over $\mathbb F_2$.  Then

${\rm{(i)}}$  $A$ is unital iff it is isomorphic to either one of $S_7$ and $S_{11}$. 

${\rm{(ii)}}$ $A$ is commutative iff it is isomorphic to either one of $S_i\, \, (5\le i\le11)$ and  $S'_i\, \, (1\le i\le 9)$.

${\rm{(iii)}}$  $A$  is associative iff it is isomorphic to either one of $S_7, S_8, S_{10}$ and $S'_{10}$.
\end{pro}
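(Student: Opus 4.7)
The proof is short because all of the substantive work has been done by Lemmas~\ref{lem:unital-straight}, \ref{lem:comm-straight}, and \ref{lem:assoc-straight}. The plan is simply to combine each of these three lemmas with the reduction made at the top of Section~\ref{sec:ec-straight}: every two-dimensional straight algebra $A$ over $\mathbb F_2$ is isomorphic to some $S(p,q,a,b,c,d)$. Indeed, by definition of straight there is an element $e\in A$ whose square is not a scalar multiple of $e$, so $\{e,e^2\}$ is a linear base; setting $f=e^2$ yields a base with $e^2=f$, which is exactly the normal form $S(p,q,a,b,c,d)$.

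Once this reduction is in hand, each clause follows by invoking the matching lemma and translating the statement "equals'' into "isomorphic to.'' For (i), Lemma~\ref{lem:unital-straight} says that among the family $S(p,q,a,b,c,d)$ the unital ones are precisely $S_7$ and $S_{11}$, so $A$ is unital iff $A\cong S_7$ or $A\cong S_{11}$. For (ii), Lemma~\ref{lem:comm-straight} gives the full list of commutative representatives, namely $S_i$ for $5\le i\le 11$ and $S'_i$ for $1\le i\le 9$. For (iii), Lemma~\ref{lem:assoc-straight} gives $S_7, S_8, S_{10}$ and $S'_{10}$ as the complete list of associative representatives.

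Nothing in the argument requires checking that the listed algebras are pairwise non-isomorphic, since the proposition only asserts isomorphism to \emph{some one} of the listed algebras, not a classification up to isomorphism of these three sub-families. The only point that needs a sentence of care is that unitality, commutativity, and associativity are preserved under algebra isomorphism, which is standard; this is what legitimizes the transition from the concrete normal-form statements in the three lemmas to the "iff isomorphic to'' phrasing in the proposition.

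The main (minor) obstacle is simply to make the reduction to $e^2=f$ explicit and to note invariance of the three properties under isomorphism; after that, the proof is a one-line appeal to the three preceding lemmas and can be written as: \emph{This follows from Lemmas~\ref{lem:unital-straight}, \ref{lem:comm-straight}, and \ref{lem:assoc-straight} together with the observation that every two-dimensional straight algebra over $\mathbb F_2$ is isomorphic to $S(p,q,a,b,c,d)$ for some $p,q,a,b,c,d\in\mathbb F_2$, and that unitality, commutativity, and associativity are isomorphism invariants.}
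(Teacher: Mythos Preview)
Your proposal is correct and matches the paper's own argument: the paper's proof is literally the single sentence ``The following result immediately follows from Lemmas~\ref{lem:unital-straight}, \ref{lem:comm-straight} and \ref{lem:assoc-straight}.'' You have simply unpacked the implicit reduction to the normal form $S(p,q,a,b,c,d)$ and the isomorphism-invariance of the three properties, which is exactly what the paper is silently using.
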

\vspace{2mm}

\section{Classification of endo-commutative straight algebras of dimension 2}\label{sec:class-ec-straight}
In this section, we classify endo-commutative straight algebras of dimension 2 over $\mathbb F_2$ up to isomorphism.

Define $\mathcal{ECS}_1=\{S_i : 1\le i\le 13, {\rm{rank}}\, S_i=1\}$ and $\mathcal{ECS}_2=\{S_i : 1\le i\le 13, {\rm{rank}}\, S_i=2\}$.  By easy observations, we have 
\[
\mathcal{ECS}_1=\{S_1, S_2, S_8, S_9, S_{10}\}\, \, {\rm{and}}\, \, \mathcal{ECS}_2=\{S_3, S_4, S_5, S_6, S_7, S_{11}, S_{12}, S_{13}\}.
\]

Since rank is an invariant of isomorphism by Corollary~\ref{cor:rank}, each algebra in $\mathcal{ECS}_1$ is not isomorphic to any algebra in $\mathcal{ECS}_2$.
Moreover, by Proposition~\ref{prop:ec-straight}, an endo-commutative straight algebra of dimension 2 over $\mathbb F_2$ is isomorphic to either one in $\mathcal{ECS}_1\cup\mathcal{ECS}_2$
and so it suffices to investigate isomorphism between the algebras in each of $\mathcal{ECS}_1$ and $\mathcal{ECS}_2$.

\begin{lem}\label{lem:es1}
No two algebras in $\mathcal{ECS}_{1}$ are isomorphic to each other.
\end{lem}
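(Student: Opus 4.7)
The plan is to distinguish the five algebras $S_1, S_2, S_8, S_9, S_{10}$ pairwise, using basis-free invariants (commutativity, associativity, nilpotency) for most pairs, and a direct transformation-matrix calculation for the two remaining pairs.

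First, I would apply Lemma~\ref{lem:comm-straight} and Lemma~\ref{lem:assoc-straight} to tabulate which of the five algebras are commutative and which are associative. Among $S_1,\dots,S_{13}$, the commutative ones include $S_i$ for $5\le i\le 11$, so within $\mathcal{ECS}_1$ the commutative algebras are exactly $S_8,S_9,S_{10}$, while $S_1$ and $S_2$ are non-commutative. Since commutativity is isomorphism-invariant, this separates $\{S_1,S_2\}$ from $\{S_8,S_9,S_{10}\}$, killing six of the ten pairs at once.

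Next, from Lemma~\ref{lem:assoc-straight} the associative algebras inside $\mathcal{ECS}_1$ are $S_8$ and $S_{10}$, whereas $S_9$ is non-associative. Hence $S_9\ncong S_8$ and $S_9\ncong S_{10}$. To separate the remaining commutative, associative pair $S_8$ and $S_{10}$, I would observe that $S_8$ is nilpotent: the only nonzero defining product is $e^2=f$, and $f$ annihilates everything, so $A^3=0$; by contrast in $S_{10}$ every product of basis vectors equals $f$, and $f^2=f$ makes $f$ an idempotent, so $A^n=A^2\neq 0$ for all $n\ge 2$. Nilpotency is plainly isomorphism-invariant, so $S_8\ncong S_{10}$.

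The last pair, $S_1\ncong S_2$, cannot be separated by commutativity, associativity, rank, or nilpotency (both have rank $1$, both are non-commutative and non-associative, and both are nilpotent of the same index), so I would resort to Proposition~\ref{pro:iso-criterion} and show directly that no $X=\bigl(\begin{smallmatrix}a&b\\c&d\end{smallmatrix}\bigr)\in GL_2(\mathbb F_2)$ satisfies $\widetilde{X}S_1=S_2X$. A quick computation of the two $4\times 2$ matrices in $\mathbb F_2$ forces $c=0$ from the first-column entries and then $d=0$ from the third row, so $\det X=0$, a contradiction. This is the only step that requires a computation rather than invoking a named invariant, so I expect it to be the main (though still very short) obstacle; everything else reduces to quoting Lemma~\ref{lem:comm-straight}, Lemma~\ref{lem:assoc-straight}, and Corollary~\ref{cor:rank}.
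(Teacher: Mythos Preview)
Your proof is correct and follows essentially the same strategy as the paper: commutativity (Lemma~\ref{lem:comm-straight}) separates $\{S_1,S_2\}$ from $\{S_8,S_9,S_{10}\}$, associativity (Lemma~\ref{lem:assoc-straight}) isolates $S_9$ from $S_8$ and $S_{10}$, and a direct transformation-matrix computation via Proposition~\ref{pro:iso-criterion} handles $S_1\ncong S_2$. The one genuine difference is your treatment of $S_8\ncong S_{10}$: the paper carries out another explicit matrix calculation there, while you observe that $S_8$ is nilpotent ($S_8^3=0$) whereas $S_{10}$ contains the idempotent $f$. Your route is a bit cleaner and avoids the second computation; the paper's route stays uniform and avoids introducing an extra invariant.

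One small correction: in your parenthetical remark you assert that $S_1$ and $S_2$ are both nilpotent. They are not: in $S_1$ one has $fe=f$, and in $S_2$ one has $ef=f$, so in each case $A^n=\langle f\rangle\neq 0$ for all $n\ge 2$. This slip is harmless for the proof itself, since you (rightly) treat $S_1\ncong S_2$ by direct computation anyway; just delete or amend that aside.
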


\begin{proof}
(i) $S_1$ is not isomorphic to any one of $S_2, S_8, S_9, S_{10}$.  In fact, we see from Lemma~\ref{lem:comm-straight} that $S_1$ is non-commutative, but $S_8, S_9$ and $S_{10}$ are commutative.  This implies $S_1\ncong S_8, S_9, S_{10}$. We next show $S_1\ncong S_2$.  Suppose on the contrary that $S_1\cong S_2$.  Then there is $X=\begin{pmatrix}a&b\\c&d\end{pmatrix}\in GL_2(\mathbb F_2)$ such that $\widetilde XS_2=S_1X$, which is rewritten as $\left\{\begin{array}{@{\,}lll}
   c=0\\a+ab=d\\c+cd=0\\ac+ad=0\\ac+bc=d.
\end{array}\right. 
$  This implies easily $c=d=0$, hence $|X|=0$, a contradiction.
\vspace{2mm}

(ii) $S_2$ is not isomorphic to any one of $S_8, S_9, S_{10}$.  In fact, we see from Lemma~\ref{lem:comm-straight} that $S_2$ is non-commutative, but $S_8, S_9$ and $S_{10}$ are
commutative.
\vspace{2mm}

(iii) $S_8$ is not isomorphic to any one of $S_9, S_{10}$.  In fact, we see from Lemma~\ref{lem:assoc-straight} that $S_8$ is associative, but $S_9$ is not.  This implies $S_8\ncong S_9$.  We next show $S_8\ncong S_{10}$.  Suppose on the contrary that $S_8\cong S_{10}$.  Then there is $X=\begin{pmatrix}a&b\\c&d\end{pmatrix}\in GL_2(\mathbb F_2)$ such that $\widetilde XS_{10}=S_8X$, which is rewritten as 
\[
\left\{\begin{array}{@{\,}lll}
   c=c+d=ac+bd+ad+bc=bd+bc+ad=0\\a+b=d.
\end{array}\right. 
\]
This implies easily $c=d=0$, hence $|X|=0$, a contradiction.
\vspace{2mm}

(iv) $S_9\ncong S_{10}$.  In fact, we see from Lemma~\ref{lem:assoc-straight} that $S_{10}$ is associative, but $S_9$ is not.
\vspace{2mm}

By (i), (ii), (iii) and (iv), we obtain the desired result.
\end{proof}
\vspace{2mm}

\begin{lem}\label{lem:es2}
No two algebras in $\mathcal{ECS}_{2}$ are isomorphic to each other.
\end{lem}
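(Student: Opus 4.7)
The plan is to distinguish the eight algebras in
\[
\mathcal{ECS}_2 = \{S_3, S_4, S_5, S_6, S_7, S_{11}, S_{12}, S_{13}\}
\]
by first using the three structural invariants from Section~\ref{sec:3special-straight} to peel off as many pairs as possible, and then handling the remaining pairs by the same transformation-matrix technique employed in Lemma~\ref{lem:es1}.

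First, by Lemma~\ref{lem:assoc-straight}, $S_7$ is the only associative algebra in $\mathcal{ECS}_2$, so $S_7$ is non-isomorphic to all seven others. By Lemma~\ref{lem:unital-straight}, the unital algebras are exactly $S_7$ and $S_{11}$, so $S_{11}$ is non-isomorphic to each of the six non-unital algebras $S_3, S_4, S_5, S_6, S_{12}, S_{13}$. Then by Lemma~\ref{lem:comm-straight}, among these six the commutative ones are $S_5$ and $S_6$, while $S_3, S_4, S_{12}, S_{13}$ are non-commutative, so the two clusters $\{S_5, S_6\}$ and $\{S_3, S_4, S_{12}, S_{13}\}$ are separated.

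After these reductions, the residual task is to verify the seven non-isomorphisms
\[
S_5 \ncong S_6, \quad S_3 \ncong S_4, \ S_3 \ncong S_{12}, \ S_3 \ncong S_{13}, \ S_4 \ncong S_{12}, \ S_4 \ncong S_{13}, \ S_{12} \ncong S_{13}.
\]
For each such pair $(S_i, S_j)$, I would suppose a transformation matrix $X = \begin{pmatrix} a & b \\ c & d \end{pmatrix} \in GL_2(\mathbb{F}_2)$ satisfying $\widetilde{X} S_j = S_i X$ exists; unfold this matrix identity using the explicit definition of $\widetilde{X}$ together with the known entries of $S_i$ and $S_j$, producing a system of eight $\mathbb{F}_2$-scalar equations in $a, b, c, d$; and show that this system forces $ad + bc = 0$, so that $\det X = 0$, contradicting $X \in GL_2(\mathbb{F}_2)$.

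The main obstacle will be the non-commutative cluster $\{S_3, S_4, S_{12}, S_{13}\}$, which supplies six of the seven residual pairs; since all four algebras share the same invariant profile (rank $2$, non-commutative, non-unital, non-associative), none of the earlier lemmas can shortcut the work, and each pair must be treated separately. The individual calculations are routine $\mathbb{F}_2$-linear algebra—typically one or two of the eight equations collapse to $c = d = 0$ or $a = b = 0$—but the bookkeeping of carrying seven different right-hand sides is what makes the proof lengthy rather than conceptually difficult.
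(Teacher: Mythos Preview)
Your plan is correct and matches the paper's proof essentially step for step: the paper uses the same three invariants (associativity, unitality, commutativity) to reduce to exactly the same seven residual pairs $S_5\ncong S_6$, $S_3\ncong S_4$, $S_3\ncong S_{12}$, $S_3\ncong S_{13}$, $S_4\ncong S_{12}$, $S_4\ncong S_{13}$, $S_{12}\ncong S_{13}$, and then disposes of each by writing out $\widetilde X S_j=S_iX$ and deriving $|X|=0$. The only difference is presentational---the paper proceeds algebra-by-algebra rather than invariant-by-invariant---so once you actually carry out the seven $\mathbb F_2$-calculations your proof will be the same as theirs.
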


\begin{proof}
  (i) $S_3$ is not isomorphic to any one of $S_4, S_5, S_6, , S_7, S_{11}, S_{12}, S_{13}$.  In fact, we see from Lemma~\ref{lem:comm-straight} that $S_3$ is non-commutative, but $S_5, S_6, S_7$ and $S_{11}$ are commutative.  This implies $S_3\ncong S_5, S_6, S_7, S_{11}$.  We next show $S_3\ncong S_4$.  Suppose on the contrary that $S_3\cong S_4$.  Then there is $X=\begin{pmatrix}a&b\\c&d\end{pmatrix}\in GL_2(\mathbb F_2)$ such that $\widetilde XS_{4}=S_3X$, which implies
  $\left\{\begin{array}{@{\,}lll}
c=0\\ac+bd+bc=b+d.
 \end{array}\right. $
  This implies easily $b=d=0$, hence $|X|=0$, a contradiction. 
We next show $S_3\ncong S_{12}$.  Suppose on the contrary that $S_3\cong S_{12}$.  Then there is $X=\begin{pmatrix}a&b\\c&d\end{pmatrix}\in GL_2(\mathbb F_2)$ such that $\widetilde XS_{12}=S_3X$, which implies $\left\{\begin{array}{@{\,}lll}
b+ab=c\\a+ab=d\\d+cd=c.
\end{array}\right. $  If $a=0$, then $d=0$ by the second equation, so $c=0$ by the third equation, hence $|X|=0$, a contradiction.  If $a=1$, then $c=0$ by the first equation, so $d=0$ by the third equation, hence $|X|=0$, a contradiction.  We next show $S_3\ncong S_{13}$.  Suppose on the contrary that $S_3\cong S_{13}$.  Then there is $X=\begin{pmatrix}a&b\\c&d\end{pmatrix}\in GL_2(\mathbb F_2)$ such that $\widetilde XS_{13}=S_3X$, which implies
$\left\{\begin{array}{@{\,}lll}
b+ab=c\\a+ab=d\\d+cd=c.
\end{array}\right. $
Then we arrive at the contradiction as observed above.
\vspace{2mm}

(ii) $S_4$ is not isomorphic to any one of $S_5, S_6, S_7, S_{11}, S_{12}, S_{13}$.  In fact, we see from Lemma~\ref{lem:comm-straight} that $S_4$ is non-commutative, but $S_5, S_6, S_7$ and $S_{11}$ are commutative.  This implies $S_4\ncong S_5, S_6, S_7, S_{11}$.  We next show $S_4\ncong S_{12}$.  Suppose on the contrary that $S_4\cong S_{12}$.  Then there is $X=\begin{pmatrix}a&b\\c&d\end{pmatrix}\in GL_2(\mathbb F_2)$ such that $\widetilde XS_{12}=S_4X$, which implies $c+cd=d$.  This implies easily $d=c=0$, hence $|X|=0$, a contradiction.  We next show $S_4\ncong S_{13}$.  Suppose on the contrary that $S_4\cong S_{13}$.  Then there is $X=\begin{pmatrix}a&b\\c&d\end{pmatrix}\in GL_2(\mathbb F_2)$ such that $\widetilde XS_{13}=S_4X$, which implies $d+cd=c$.  This implies easily $c=d=0$, hence $|X|=0$, a contradiction.
\vspace{2mm}

(iii) $S_5$ is not isomorphic to any one of $S_6, S_7, S_{11}, S_{12}, S_{13}$.  In fact, we see from Lemma~\ref{lem:unital-straight} that $S_5$ is non-unital, but $S_7$ and $S_{11}$ are unital.  This implies $S_5\ncong S_7, S_{11}$.  Also we see from Lemma~\ref{lem:comm-straight} that $S_5$ is commutative, but $S_{12}$ and $S_{13}$ are non-commutative.  This implies $S_5\ncong S_{12}, S_{13}$.  We next show $S_5\ncong S_6$.  Suppose on the contrary that $S_5\cong S_6$.    Then there is $X=\begin{pmatrix}a&b\\c&d\end{pmatrix}\in GL_2(\mathbb F_2)$ such that $\widetilde XS_{6}=S_5X$, which implies $\left\{\begin{array}{@{\,}lll}
b=c=b+d\\a=d=a+c.
\end{array}\right. 
$  This implies easily $d=c=0$, hence $|X|=0$, a contradiction.
\vspace{2mm}

(iv) $S_6$ is not isomorphic to any one of $S_7, S_{11}, S_{12}, S_{13}$.  In fact, we see from Lemma~\ref{lem:unital-straight} that $S_6$ is non-unital, but $S_7$ and $S_{11}$ are unital.  This implies $S_6\ncong S_7, S_{11}$.  Also we see from Lemma~\ref{lem:comm-straight} that $S_6$ is commutative, but $S_{12}$ and $S_{13}$ are not.  This implies $S_6\ncong S_{12}, S_{13}$.
\vspace{2mm}

(v) $S_7$ is not isomorphic to any one of $S_{11}, S_{12}, S_{13}$.  In fact, we see from Lemma~\ref{lem:assoc-straight} that $S_7$ is associative, but $S_{11}, S_{12}$ and $S_{13}$
are not.  This implies $S_7\ncong S_{11}, S_{12}, S_{13}$.  
\vspace{2mm}

(vi) $S_{11}$ is not isomorphic to any one of $S_{12}, S_{13}$.  In fact, we see from Lemma~\ref{lem:comm-straight} that $S_{11}$ is commutative, but $S_{12}$ and $S_{13}$ are
not.
\vspace{2mm}

(vii) $S_{12}$ is not isomorphic to $S_{13}$.  In fact, suppose on the contrary that $S_{12}\cong S_{13}$.  Then there is $X=\begin{pmatrix}a&b\\c&d\end{pmatrix}\in GL_2(\mathbb F_2)$ such that $\widetilde XS_{13}=S_{12}X$, which implies $\left\{\begin{array}{@{\,}lll}
b+ab=c=bd+ad\\a+ab=d\\c+cd=b.
\end{array}\right. 
$ If $a=0$, then $d=0$ by the second equation, so $c=0$ by the fourth equation, hence $|X|=0$, a contradiction.  If $a=1$, then $c=0$ by the first equation, so $b=0$ by the third equation, so $d=0$ by the fourth equation, hence $|X|=0$, a contradiction. 
\vspace{2mm}

By (i), (ii), (iii), (iv), (v), (vi) and (vii),  we obtain the desired result.
\end{proof}
\vspace{2mm}

Here we put 
\[
ECS^2_1=S_1, ECS^2_2=S_2, ECS^2_3=S_3, ECS^2_4=S_4, ECS^2_5=S_5, ECS^2_{6}=S_6, ECS^2_7=S_7, 
\]
\[
ECS^2_8=S_8, ECS^2_{9}=S_9, ECS^2_{10}=S_{10}, ECS^2_{11}=S_{11}, ECS^2_{12}=S_{12} \, \, {\rm{and}}\, \, ECS^2_{13}=S_{13}.
\]
Then by Lemmas~\ref{lem:es1} and \ref{lem:es2}, we have the following:

\begin{thm}\label{thm:ec-straight}
Up to isomorphism, two-dimensional endo-commutative straight algebras over $\mathbb F_2$ are exactly classified into the thirteen algebras
\[
ECS^2_1, ECS^2_2, ECS^2_3, ECS^2_4, ECS^2_5, ECS^2_{6}, ECS^2_7, ECS^2_8, ECS^2_{9}, ECS^2_{10}, ECS^2_{11}, \] \[ECS^2_{12} \, \, {\rm{and}}\, \, ECS^2_{13}
\]
with multiplication tables on a linear base $\{e, f\}$ defined by
\[
\begin{pmatrix}f&0\\f&0\end{pmatrix}, \begin{pmatrix}f&f\\0&0\end{pmatrix}, \begin{pmatrix}f&e+f\\e&f\end{pmatrix}, \begin{pmatrix}f&e\\e+f&f\end{pmatrix}, \begin{pmatrix}f&e\\e&e+f\end{pmatrix}, 
\begin{pmatrix}f&0\\0&e\end{pmatrix}, \begin{pmatrix}f&e+f\\e+f&e\end{pmatrix}, 
\]
\[
\begin{pmatrix}f&0\\0&0\end{pmatrix}, 
\begin{pmatrix}f&f\\f&0\end{pmatrix}, 
\begin{pmatrix}f&f\\f&f\end{pmatrix}, 
\begin{pmatrix}f&e\\e&f\end{pmatrix}, \begin{pmatrix}f&f\\e&e\end{pmatrix}\, \, {\rm{and}} \, \, \begin{pmatrix}f&e\\f&e\end{pmatrix}
\]
respectively. 
\end{thm}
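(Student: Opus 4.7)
The proof is essentially an assembly of results already established in Sections~\ref{sec:ec-straight}--\ref{sec:class-ec-straight}. My plan is to proceed in three stages: (a) reduce an arbitrary two-dimensional endo-commutative straight algebra to one of the thirteen candidates $S_1,\dots,S_{13}$; (b) show these candidates are pairwise non-isomorphic; (c) read off the multiplication tables from the structure matrices to obtain the stated form.

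For stage (a), let $A$ be a two-dimensional endo-commutative straight algebra over $\mathbb F_2$. By definition of straightness, there exists an element whose square is not a scalar multiple of itself; as was observed at the start of Section~\ref{sec:ec-straight}, a change of basis lets us assume $e^2=f$, so the structure matrix has the form $S(p,q,a,b,c,d)$ for some $p,q,a,b,c,d\in\mathbb F_2$. Proposition~\ref{prop:ec-straight} then forces $A=S_i$ for some $1\le i\le 13$. Conversely, each $S_i$ is already verified to be endo-commutative and straight in Proposition~\ref{prop:ec-straight}.

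For stage (b), I would split the thirteen algebras by rank. Since the rank of the structure matrix is an isomorphism invariant by Corollary~\ref{cor:rank}, a direct inspection gives
\[
\mathcal{ECS}_1=\{S_1,S_2,S_8,S_9,S_{10}\},\qquad \mathcal{ECS}_2=\{S_3,S_4,S_5,S_6,S_7,S_{11},S_{12},S_{13}\},
\]
and no member of $\mathcal{ECS}_1$ is isomorphic to any member of $\mathcal{ECS}_2$. Lemmas~\ref{lem:es1} and~\ref{lem:es2} complete the pairwise non-isomorphism verification within $\mathcal{ECS}_1$ and $\mathcal{ECS}_2$, respectively, using commutativity, unitality, and associativity as distinguishing invariants (via Lemmas~\ref{lem:unital-straight}, \ref{lem:comm-straight}, \ref{lem:assoc-straight}) where possible, and solving the equivalence system $\widetilde{X}S_j=S_iX$ directly when the three invariants do not separate the pair.

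For stage (c), I would just relabel $ECS^2_i:=S_i$ and translate each structure matrix $S(p,q,a,b,c,d)=\begin{pmatrix}0&1\\p&q\\a&b\\c&d\end{pmatrix}$ into its multiplication table $\begin{pmatrix}e^2&ef\\fe&f^2\end{pmatrix}$ with $e^2=f$, $f^2=pe+qf$, $ef=ae+bf$, $fe=ce+df$; this is routine and produces exactly the thirteen matrices displayed in the theorem. The genuine work (the non-isomorphism arguments involving explicit transformation matrices $X\in GL_2(\mathbb F_2)$) has already been absorbed into Lemmas~\ref{lem:es1}, \ref{lem:es2}, so the theorem itself reduces to quoting those lemmas together with Proposition~\ref{prop:ec-straight}. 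The only potential obstacle is purely bookkeeping: making sure the rank-partition of the thirteen candidates is correct and that the multiplication tables are transcribed without error.
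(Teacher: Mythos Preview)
Your proposal is correct and mirrors the paper's argument exactly: the paper also invokes Proposition~\ref{prop:ec-straight} for the candidate list, splits by rank via Corollary~\ref{cor:rank} into $\mathcal{ECS}_1$ and $\mathcal{ECS}_2$, and then cites Lemmas~\ref{lem:es1} and~\ref{lem:es2} for the pairwise non-isomorphism, with the multiplication tables read off from the structure matrices.
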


The following result is a restatement of Proposition~\ref{pro:3special-straight}.

\begin{pro}\label{pro:restate-3special-straight}
Let $A$ be a straight algebra of dimension 2 over $\mathbb F_2$.  Then

${\rm{(i)}}$  $A$ is unital iff it is isomorphic to either one of $ECS^2_7$ and $ECS^2_{11}$. 

${\rm{(ii)}}$ $A$ is commutative iff it is isomorphic to either one of $ECS^2_i\, \, (5\le i\le11)$ and $S'_i\, \, (1\le i\le 9)$.
None of $S'_i\, \, (1\le i\le 9)$ is endo-commutative.

${\rm{(iii)}}$  $A$  is associative iff it is isomorphic to either one of $ECS^2_7, ECS^2_8, ECS^2_{10}$ and $S'_{10}$.  The algebra $S'_{10}$ is not endo-commutative.
\end{pro}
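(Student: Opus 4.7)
The plan is to derive Proposition~\ref{pro:restate-3special-straight} as a direct rephrasing of Proposition~\ref{pro:3special-straight}, using the identifications $ECS^2_i = S_i$ for $1\le i\le 13$ introduced just before Theorem~\ref{thm:ec-straight}. Under this correspondence, the lists of unital, commutative, and associative $S_i$ in Proposition~\ref{pro:3special-straight} translate verbatim into the lists of $ECS^2_i$ appearing in parts (i), (ii), and (iii), so the iff-statements themselves require no new work beyond invoking the prior proposition and renaming.

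The genuinely new content is the pair of assertions that the auxiliary algebras $S'_1,\ldots,S'_9$ (appearing in (ii)) and $S'_{10}$ (appearing in (iii)) are not endo-commutative. For this I would use Proposition~\ref{prop:ec-straight}, which states that $S(p,q,a,b,c,d)$ is endo-commutative if and only if its defining six-tuple $(p,q,a,b,c,d)$ coincides with one of the thirteen tuples producing $S_1,\ldots,S_{13}$. My verification is then a tuple-by-tuple comparison: read off the defining tuple of each $S'_i$ from the definitions listed just before Lemma~\ref{lem:comm-straight} (and for $S'_{10}$ from the line just before Lemma~\ref{lem:assoc-straight}), and check that each such tuple fails to appear in the list of thirteen endo-commutative tuples enumerated in cases (A), (B), (C) preceding Proposition~\ref{prop:ec-straight}.

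This is essentially pure bookkeeping and presents no conceptual obstacle; the only care required is to enumerate the tuples accurately. To make the verification transparent I would lay the comparison out as a short table with fifteen rows---one per $S'_i$ plus a column of the thirteen endo-commutative tuples---and confirm all ten negative assertions in a single pass. The main potential pitfall is simply miscopying a coordinate, so I would cross-check each $S'_i$'s tuple against its original definition before declaring it absent from the endo-commutative list.
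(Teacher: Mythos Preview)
Your plan matches the paper's approach: the paper offers no proof beyond the sentence ``The following result is a restatement of Proposition~\ref{pro:3special-straight},'' so invoking that proposition together with the identifications $ECS^2_i=S_i$ is exactly right, and checking the non-endo-commutativity of the $S'_i$ via Proposition~\ref{prop:ec-straight} is the natural (and only available) method.

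However, your tuple-by-tuple verification will hit a genuine obstruction at $S'_{10}$. From the line before Lemma~\ref{lem:assoc-straight} one has $S'_{10}=S(0,1,1,0,1,0)$, but this is precisely the tuple defining $S_{11}$ in the list before Proposition~\ref{prop:ec-straight}. Hence $S'_{10}=S_{11}=ECS^2_{11}$, which \emph{is} endo-commutative, so the assertion ``The algebra $S'_{10}$ is not endo-commutative'' in part~(iii) cannot be established; indeed the four associative tuples $(0,0,0,0,0,0)$, $(0,1,0,1,0,1)$, $(0,1,1,0,1,0)$, $(1,0,1,1,1,1)$ computed after~(\ref{eq:assoc-straight}) are exactly $S_8$, $S_{10}$, $S_{11}$, $S_7$, all already on the endo-commutative list. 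Your method is sound---it simply exposes an error in the statement rather than proving it. For the nine algebras $S'_1,\ldots,S'_9$ in part~(ii) the check goes through without difficulty, since none of their tuples coincide with any of $S_1,\ldots,S_{13}$.
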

\vspace{3mm}

\end{document}